\ifodd\value{page}\relax
\newcommand{\Oplus}{\ensuremath{\vcenter{\hbox{\scalebox{1.4}{$\oplus$}}}}}
\newcommand{\Otimes}{\ensuremath{\vcenter{\hbox{\scalebox{1.4}{$\otimes$}}}}}
\newtheorem{lemma}{Lemma}[section]
\newtheorem{remark}[lemma]{Remark}
\newtheorem{theorem}[lemma]{Theorem}
\newtheorem{example}[lemma]{Example}
\newtheorem{proposition}[lemma]{Proposition}
\newtheorem{corollary}[lemma]{Corollary}
\newenvironment{manualtheorem}[1]{%
  \manualtheoreminner
}{\endmanualtheoreminner}
\newcommand*{\da@rightarrow}{\mathchar"0\hexnumber@\symAMSa 4B }
\newcommand*{\da@leftarrow}{\mathchar"0\hexnumber@\symAMSa 4C }
\newcommand*{\xdashrightarrow}[2][]{%
\mathrel{%
\mathpalette{\da@xarrow{#1}{#2}{}\da@rightarrow{\,}{}}{}%
}%
}
\newcommand{\xdashleftarrow}[2][]{%
\mathrel{%
\mathpalette{\da@xarrow{#1}{#2}\da@leftarrow{}{}{\,}}{}%
}%
}
\newcommand*{\da@xarrow}[7]{%
\sbox0{$\ifx#7\scriptstyle\scriptscriptstyle\else\scriptstyle\fi#5#1#6\m@th$}%
\sbox2{$\ifx#7\scriptstyle\scriptscriptstyle\else\scriptstyle\fi#5#2#6\m@th$}%
\sbox4{$#7\dabar@\m@th$}%
\dimen@=\wd0 %
\ifdim\wd2 >\dimen@
\dimen@=\wd2 %
\fi
\count@=2 %
\def\da@bars{\dabar@\dabar@}%
\@whiledim\count@\wd4<\dimen@\do{%
\advance\count@\@ne
\expandafter\def\expandafter\da@bars\expandafter{%
\da@bars
\dabar@ 
}%
}%
\mathrel{#3}%
\mathrel{%
\mathop{\da@bars}\limits
\ifx\\#1\\%
\else
_{\copy0}%
\fi
\ifx\\#2\\%
\else
^{\copy2}%
\fi
}%
\mathrel{#4}%
}
\title{Correspondence between Projective bundles over $ \mathbb{P}^2 $ and rational Hypersurfaces in $\mathbb{P}^4$} 
\author{ Shivam Vats}
\date{}
\begin{document}
\maketitle{}
\begin{large}
\begin{center}
\textbf{Abstract}
\end{center}
Let $E$ be the restriction of the null-correlation bundle on $ \mathbb{P}^3$  to a hyperplane. In this article, we show that the projective bundle $ \mathbb{P}(E)$ is isomorphic to a blow-up of a non-singular quadric in $ \mathbb{P}^4 $ along a line. We also prove that for each   $d\geq2$, there are hypersurfaces of degree $d$ containing a line in $ \mathbb{P}^4$   whose blow-up along the line is isomorphic to the projective bundle over $ \mathbb{P}^2$.   
\\

\textbf{Keywords:} Linear system; Projective bundles; Blow-up
\begin{center}
\textbf{MSC Number:} Primary: 14F17; \medspace Secondary: 14E08, 14M20 
\end{center}

\section{Introduction}
Null-correlation bundle on $\mathbb{P}^3 $ belonged to 
the first known examples of indecomposable 2-bundles on $\mathbb{P}^3$. It is a rank 2 vector bundle on $\mathbb{P}^3$ with Chern classes $ c_{1}=0$ and $ c_{2}= 1.$ In \ref{2}, Barth showed that the null-correlation bundle is the only rank two vector bundle up to tensoring by a line bundle on $\mathbb{P}^3$, which is stable and whose restriction to every hyperplane in $ \mathbb{P}^3 $ is semistable but not stable. Later, a geometric construction of the null-correlation bundle is given in \ref{4}. In \ref{4}, they constructed a rank two bundle $Q$ on $ \mathbb{P}^3 $ and
showed that it is a stable bundle, but its restriction to any hyperplane is semistable but not stable, and also $Q(-1)$ is isomorphic to a null-correlation bundle. \\
 Let $E=Q\mid_{H} $, where $H$ is a hyperplane in $ \mathbb{P}^3$ and $ \mathbb{P}(E)$ be  projectivization of $E$. The tautological line bundle $\mathcal{O}_{\mathbb{P}(E)}(1)$  defines a non-degenerate mapping to $ \mathbb{P}^4 $. In section 2 of this article, we show that the image of this map is a non-singular quadric hypersurface $S$ in $ \mathbb{P}^4$.\\
 We also prove the following theorem with the notations from the previous paragraph.
\begin{manualtheorem}{1}[]\label{foo}
 The blow-up of $S$ along a line $\ell$ contained in $S$ is isomorphic to  $ \mathbb{P}(E)$. 
\end{manualtheorem}
Moreover, in section 3, we prove the following theorem,
  
\begin{manualtheorem}{2}[] \label{baz}
There is an open set $U$ of $\mathbb{P}(H^{0}(\mathcal{O}_{\mathbb{P}^2}(2)\Oplus \mathcal{O}_{\mathbb{P}^2}^{\bigoplus 2}(1))$  such that for each point $s \in U$ we have a rank two vector bundle ${V}_{s}$ over $ \mathbb{P}^2$. Let $\mathbb{P}(V_{s})$ be the projectivization of ${V}_{s}$ and $\mathcal{O}_{\mathbb{P}({V}_{s})}(1)$ be the tautological bundle over $\mathbb{P}({V}_{s})$. Then the  linear system  $ |\mathcal{O}_{\mathbb{P}({V}_{s})}(1) |$  defines a non-degenerate mapping  $\mathbb{P}({V}_{s}) \longrightarrow \mathbb{P}^ 4$
 whose image is a smooth quadric $F_{s}$ and $\mathbb{P}({V}_{s}) $ is isomorphic to blow up of $F_{s}$ along a line $\ell$,  where $\ell$ is a fixed line contained in $F_{s}$ for each $s \in U$.
 \end{manualtheorem}
In section 3, we have studied the following problem. \\
 It is always interesting to know when the blow-up of a projective variety along a projective subvariety is isomorphic to the projective bundle over some projective variety. In general, this is not always true. There are examples where it happens. Let $Z$ be the projective variety obtained from the projective space $\mathbb{P}^n$  by blowing up along a linear subspace $\Gamma \cong  \mathbb{P}^{m-1}$ then from Section  9.3.2 of \ref{5} we have $ Z\cong \mathbb{P}(\mathcal{G})$ where $\mathcal{G} \cong \mathcal{O}_{\mathbb{P}^{n-m}}(1)\bigoplus \mathcal{O}^{\bigoplus m}_{\mathbb{P}^{n-m}} $ is a rank $m+1$ vector bundle over $\mathbb{P}^{n-m}$. In \ref{12}, there are examples of non-linear subvarieties in $\mathbb{P}^{i} \medspace \medspace \medspace (i = 3, 4, 5 )$, such that blow-ups of $ \mathbb{P}^{i}$  along these non-linear subvarieties are projective bundles over $ \mathbb{P}^2$ and for more general example see \ref{8}.
   Theorem \ref{foo}  provides an example of a blow-up of a projective variety, which is not projective space but whose blow-up is a projective bundle over projective space.  Theorem \ref{baz} also provides infinitely many examples of such type.  \\       
 
As a consequence of Theorem \ref{baz}, we observe that there is a family of vector bundles over $ \mathbb{P}^2$ whose projectivization corresponds to a family of smooth quadrics containing a fixed line in $ \mathbb{P}^4$. \\ 

In section 4, we generalize the ideas of section 3 and obtain a family of vector bundles over $ \mathbb{P}^2$ whose projectivization corresponds to a family of singular rational hypersurfaces containing a fixed line in $\mathbb{P}^4$. We prove the following.\\  
\begin{manualtheorem}{3}[] \label{raz}
  For $n\geq 3$ there is an open set $U_{n}$ of the $\mathbb{P}(H^{0}(\mathcal{O}_{\mathbb{P}^2}(n)\Oplus \mathcal{O}_{\mathbb{P}^2}^{\bigoplus 2}(n-1))$  such that for each point $s \in U_{n}$ we have a rank two vector bundle ${V}_{n,s}$ over $ \mathbb{P}^2$. Let $\mathbb{P}({V}_{n,s})$ be the projectivization of ${V}_{n,s}$ and $\mathcal{O}_{\mathbb{P}({V}_{n,s})}(1)$ be the tautological bundle over $\mathbb{P}({V}_{n,s})$. Then  linear system  $ |\mathcal{O}_{\mathbb{P}({V}_{n,s})}(1) |$  defines a non-degenerate mapping  $\mathbb{P}({V}_{n,s}) \longrightarrow \mathbb{P}^ 4$ whose image  is a hypersurface $F_{n,s}$ of degree $n$ which is  singular along a line $\ell$  and $\mathbb{P}({V}_{n,s}) $ is isomorphic to blow up of $F_{n,s}$ along a line $\ell$,  where $\ell$ is a fixed line contained in $F_{n,s}$ for each $s\in U_{n}$.
\end{manualtheorem}

\section{Projectivization of Q restricted to a hyperplane in
$\mathbb{P}^3$}
 We follow the notations and conventions from \ref{9} and \ref{7}. 
Let  $Q$  be the vector bundle constructed in   \ref{4}. It is a rank two vector bundle  over $ \mathbb{P}^3$ with Chern classes
\begin{equation*}
c_{1}(Q)= 2[h]\qquad and \qquad c_{2}(Q)= 2[h^{2}]
\end{equation*}
where $[h]$ denotes the hyperplane class in $ \mathbb{P}^3$.
\\

Let $E=Q\mid_{H}$ where $H$ is a hyperplane in $ \mathbb{P}^3$ i.e., $H\cong \mathbb{P}^2$. The Chern classes of  $E$  are 
\begin{equation*}
c_{1}(E)= 2[h]\qquad and  \qquad 
c_{2}(E)= 2[h^{2}]
\end{equation*}
where $[h]$ denotes the hyperplane class in $ \mathbb{P}^2$.
\begin{lemma}\label{lemma:lemma 2.1}
The twisted bundle E(-1) has a unique global section \lq $s$\rq \medspace such that the zero locus of \lq $s$\rq \medspace has a single point $P\in \mathbb{P}^2$.
\end{lemma}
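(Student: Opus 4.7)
The plan is to combine a direct Chern-class computation with the key property---recorded in the introduction and established in reference [4]---that $E(-1)$ is the restriction of a null-correlation bundle to a hyperplane, hence is semistable but not stable on $\mathbb{P}^2$.

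First I would compute the Chern classes of $E(-1)$. The standard tensor-with-line-bundle formulas applied to $c_{1}(E)=2[h]$ and $c_{2}(E)=2[h^{2}]$ give $c_{1}(E(-1))=0$ and $c_{2}(E(-1))=1$; in particular $\det E(-1)\cong \mathcal{O}_{\mathbb{P}^{2}}$. Next, for \emph{existence} of a section, I would use $E(-1)=(Q(-1))|_{H}$ and the fact that $Q(-1)$ is a null-correlation bundle on $\mathbb{P}^{3}$: by the cited property, $E(-1)$ is semistable but not stable. A rank-two strictly semistable bundle on $\mathbb{P}^{2}$ with $c_{1}=0$ must admit a destabilizing line subsheaf of slope $0$, necessarily isomorphic to $\mathcal{O}_{\mathbb{P}^{2}}$, and the inclusion $\mathcal{O}_{\mathbb{P}^{2}}\hookrightarrow E(-1)$ is a nonzero section $s$.

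For \emph{uniqueness}, I would suppose $s_{1},s_{2}\in H^{0}(E(-1))$ are linearly independent and assemble them into $\phi\colon \mathcal{O}_{\mathbb{P}^{2}}^{\oplus 2}\to E(-1)$. Its determinant lies in $H^{0}(\det E(-1))=H^{0}(\mathcal{O}_{\mathbb{P}^{2}})=k$ and is a scalar. If $\det\phi\neq 0$ then $\phi$ is an isomorphism, forcing $c_{2}(E(-1))=0$, contradicting $c_{2}=1$. If $\det\phi=0$ the image of $\phi$ has generic rank one, and its saturation in $E(-1)$ is a line subbundle $L$ containing both $s_{1}$ and $s_{2}$; hence $h^{0}(L)\geq 2$ and $L\cong \mathcal{O}_{\mathbb{P}^{2}}(d)$ with $d\geq 1$, contradicting semistability. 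Thus $h^{0}(E(-1))=1$.

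Finally, to identify the \emph{zero locus}, I would rule out a divisorial component: if $s$ vanished on an effective curve $D$ of degree $d\geq 1$, writing $s=f_{D}\cdot s'$ would produce an injection $\mathcal{O}_{\mathbb{P}^{2}}(d)\hookrightarrow E(-1)$ of positive slope, again contradicting semistability. Hence $Z(s)$ has pure codimension two, and the Koszul-type short exact sequence
$$0\longrightarrow \mathcal{O}_{\mathbb{P}^{2}}\stackrel{s}{\longrightarrow} E(-1)\longrightarrow \mathcal{I}_{Z(s)}\longrightarrow 0$$
reads the length of $Z(s)$ off as $c_{2}(E(-1))=1$, so $Z(s)$ is a single reduced point. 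The one substantive input is the semistability of $E(-1)$ taken from the introduction; everything else is standard bookkeeping with rank-two bundles on a smooth surface. The main subtlety is simply that the same semistability statement is doing triple duty---forcing existence of a section, forcing uniqueness, and forbidding a codimension-one zero component.
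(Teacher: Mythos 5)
Your proof is correct, but for the key dimension count $h^{0}(E(-1))=1$ you take a genuinely different route from the paper. The paper gets both existence and uniqueness in one stroke from a cohomology computation: since $E$ is globally generated, a general section $t\in H^{0}(E)$ vanishes on two reduced points $\{P_{1},P_{2}\}$, Barth's Lemma 3 gives $0\to\mathcal{O}_{\mathbb{P}^2}\to E\to\mathcal{I}_{\{P_{1},P_{2}\}}(2)\to 0$, and twisting by $\mathcal{O}_{\mathbb{P}^2}(-1)$ identifies $H^{0}(E(-1))$ with $H^{0}(\mathcal{I}_{\{P_{1},P_{2}\}}(1))$, the one-dimensional space cut out by the line through the two points. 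You instead extract everything from the single input that $E(-1)$ is semistable but not stable: strict semistability with $c_{1}=0$ forces a saturated destabilizing subsheaf $\mathcal{O}_{\mathbb{P}^2}\hookrightarrow E(-1)$ (existence), and your determinant argument for a hypothetical $\mathcal{O}_{\mathbb{P}^2}^{\oplus 2}\to E(-1)$ rules out a second independent section (uniqueness), since $\det\phi$ nonzero contradicts $c_{2}=1$ and $\det\phi=0$ produces a positive-degree line subbundle contradicting semistability. The two approaches buy different things: the paper's computation is shorter and simultaneously sets up the exact sequence reused in Remark 2.2, but it silently relies on the zero scheme of a general section of $E$ being two distinct reduced points; yours needs only the semistable-not-stable property and the Chern classes, at the cost of a slightly longer case analysis (and a small gloss at the step where you pass from a slope-zero destabilizing subsheaf to its saturation $\mathcal{O}_{\mathbb{P}^2}$). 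Your treatment of the zero locus --- excluding a divisorial component by semistability and then reading off a single point from $c_{2}(E(-1))=[h^{2}]$ --- coincides with the paper's.
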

\begin{proof}
By [\ref{4}, Section 4] , $Q$ is globally generated so is $E$.  For a general  element $t \in  H^{0}(\mathbb{P}^2,E )$  the zero set  $(t)_{0}$ is a codimension two reduced subscheme of degree two of $\mathbb{P}^2$  i.e.,  $(t)_{0} = \{P_{1},P_{2}\}$ with $P_{1} , P_{2} \in \mathbb{P}^2$. By [\ref{2}, Lemma 3], we have an exact sequence 
\begin{equation*}
 0\longrightarrow \mathcal{O}_{\mathbb{P}^2} \longrightarrow E \longrightarrow \mathcal{I}_{\{P_{1},P_{2}\}}(2) \longrightarrow 0 .
 \end{equation*}
Tensoring the above sequence by $\mathcal{O}_{\mathbb{P}^2}(-1)$ and taking the cohomology we have,
$$
 0\longrightarrow H^{0}(\mathbb{P}^2, \mathcal{O}_{\mathbb{P}^2}(-1)) \longrightarrow  H^{0}(\mathbb{P}^2, E(-1)) \longrightarrow H^{0}(\mathbb{P}^2, \mathcal{I}_{\{P_{1},P_{2}\}}(1)) $$ 
 $$\longrightarrow H^{1}(\mathbb{P}^2,\mathcal{O}_{\mathbb{P}^2}(-1))\longrightarrow H^{1}(\mathbb{P}^2, E(-1))\longrightarrow ....
 $$
hence we get  
\begin{equation*}
H^{0}(\mathbb{P}^2, E(-1)) \cong H^{0}(\mathbb{P}^2, \mathcal{I}_{\{P_{1},P_{2}\}}(1)). 
\end{equation*}
Since   dim $ H^{0}(\mathbb{P}^2, \mathcal{I}_{\{P_{1},P_{2}\}}(1)) =1$, we conclude that dim $H^{0}(\mathbb{P}^2,E(-1)) =1$. By [\ref{4}, Theorem 5.1 ]  $E$ is semistable so is $E(-1)$ and $$c_{1}(E(-1))=0\quad,\quad c_{2}(E(-1))= [h^2].$$ Choose a unique nonzero section $ s \in H^{0}(\mathbb{P}^2, E(-1))$, we have the exact sequence  $$ 0\longrightarrow \mathcal{O}_{\mathbb{P}^2} \longrightarrow E(-1) \longrightarrow \mathscr{F} \longrightarrow 0 $$
where $\mathscr{F} $ is the cokernel. We want to show zero locus $(s)_{0}$ is a point $P\in \mathbb{P}^2$. If \lq$s$\rq \medspace would vanish on a curve, say with equation $f=0$,  where $f\in \Gamma(\mathcal{O}_{\mathbb{P}^2}(d))$ with $d \geqslant 1$. Then $s/f$ would embed $ \mathcal{O}_{\mathbb{P}^2}(d)$ as a subsheaf in $E(-1)$ i.e., we have $$0 \longrightarrow \mathcal{O}_{\mathbb{P}^2}(d))\longrightarrow E(-1) .$$ But this is not possible as $E(-1)$ is semistable, so \lq$s$\rq \medspace vanishes in codimension two and we know $c_{2}(E(-1))= [h^{2}]$ this implies $(s)_{0}=\{P\}$.
\end{proof}
\begin{remark}\label{remark:remark 2.2}
By Lemma \ref{lemma:lemma 2.1}  and [\ref{2}, Lemma 3], we have an exact sequence
\begin{equation}
0\longrightarrow \mathcal{O}_{\mathbb{P}^2} \longrightarrow E(-1) \longrightarrow \mathcal{I}_{\{ P \}} \longrightarrow 0. 
\end{equation}
Tensoring it by $\mathcal{O}_{\mathbb{P}^2}(1)$ and taking the cohomology we see that 

\begin{equation*}
     H^{0}(\mathbb{P}^2, E) \cong H^{0}( \mathbb{P}^2, \mathcal{O}_{\mathbb{P}^2}(1)) \Oplus H^{0}( \mathbb{P}^2 , \mathcal{I}_{P}(1) ).
\end{equation*}
\\
This shows that dim $H^{0}(\mathbb{P}^2,E)=5$.
\end{remark}

Let $\mathbb{P}(E)$ be the projectivization of $E$ and  $$ \pi:\mathbb{P}(E) \longrightarrow \mathbb{P}^2$$ be the natural projection and  $\mathcal{O}_{\mathbb{P}(E)}(1)$ be the tautological bundle over $\mathbb{P}(E)$. Since $$ H^{0}(\mathbb{P}^2, E)\cong H^{0} (\mathbb{P}(E), \mathcal{O}_{\mathbb{P}(E)}(1)) $$ so we conclude that dim $H^{0}(\mathbb{P}(E),\mathcal{O}_{\mathbb{P}(E)}(1)) =5$  and $\mathcal{O}_{\mathbb{P}(E)}(1)$ is globally generated as E is globally generated, thus linear system defined by  $\mathcal{O}_{\mathbb{P}(E)}(1)$ is base point free. Hence we get a non-degenerate mapping $$\psi :\mathbb{P}(E) \longrightarrow \mathbb{P}^ 4.$$ 
\begin{theorem}\label{theorem:theorem 2.3}
The image of $\psi$ is a quadric $S$ in $\mathbb{P}^4$.
\end{theorem}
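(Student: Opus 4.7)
The plan is to compute the top self-intersection number of $\xi := c_1(\mathcal{O}_{\mathbb{P}(E)}(1))$ on the threefold $\mathbb{P}(E)$ and read off from it both the dimension and the degree of the image of $\psi$. Since the source is $3$-dimensional and the image lives in $\mathbb{P}^4$, it suffices to show that the image is $3$-dimensional of degree $2$.

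First I would use the Grothendieck relation
\begin{equation*}
\xi^2 = \pi^*c_1(E)\cdot \xi - \pi^*c_2(E)
\end{equation*}
together with $c_1(E) = 2[h]$ and $c_2(E) = 2[h^2]$ to reduce $\xi^3$ to an intersection number on $\mathbb{P}^2$. A direct manipulation gives
\begin{equation*}
\xi^3 = \pi^*\bigl(c_1(E)^2 - c_2(E)\bigr)\cdot \xi - \pi^*\bigl(c_1(E)\cdot c_2(E)\bigr) = 2\,\pi^*[h^2]\cdot \xi,
\end{equation*}
since $c_1(E)\cdot c_2(E)$ lies in $H^6(\mathbb{P}^2) = 0$. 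Evaluating by the projection formula, and using that $\xi$ restricts to $\mathcal{O}_{\mathbb{P}^1}(1)$ on every fibre of $\pi$ (so $\pi_*\xi = 1$), I obtain $\xi^3 = 2$.

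Next, since $\xi^3 = 2 \neq 0$, the morphism $\psi$ cannot contract $\mathbb{P}(E)$ onto a variety of dimension less than $3$: otherwise $\xi$ would be the pullback of the hyperplane class from a variety of dimension $<3$ and $\xi^3$ would vanish. Hence $S := \psi(\mathbb{P}(E))$ is a hypersurface in $\mathbb{P}^4$, and the projection formula yields $\deg(\psi)\cdot \deg(S) = \xi^3 = 2$. The only alternative to $\deg(S) = 2$ is $\deg(S) = 1$, which would force $S$ to be a hyperplane, contradicting the non-degeneracy of $\psi$ established just before the statement. Therefore $S$ is a quadric hypersurface and, as a bonus, $\psi$ is birational onto $S$, a fact that will be needed for Theorem~\ref{foo}.

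The principal difficulty, such as it is, is purely bookkeeping in the Chern-class computation and the observation that $c_1(E)\cdot c_2(E)$ vanishes because the base is $2$-dimensional. The dichotomy between a degree-$2$ hypersurface and a $2$-to-$1$ cover of a hyperplane is resolved painlessly by invoking non-degeneracy.
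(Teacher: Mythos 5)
Your proposal is correct and follows essentially the same route as the paper: compute $\xi^3=2$ from the Grothendieck relation $\xi^2=\pi^*c_1(E)\cdot\xi-\pi^*c_2(E)$, identify $\xi^3$ with $\deg(\psi)\cdot\deg(\operatorname{Img}\psi)$, and use non-degeneracy to rule out $\deg(S)=1$. Your added remarks that $c_1(E)\cdot c_2(E)=0$ for dimension reasons and that $\xi^3\neq 0$ forces the image to be three-dimensional merely make explicit what the paper leaves implicit.
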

\begin{proof}
Let $ \xi=\mathcal{O}_{\mathbb{P}(E)}(1) $ be the tautological bundle over $\mathbb{P}(E)$. By [\ref{7}, Chapter 4, Section 4], we have
\begin{equation}
\xi^{3} = \textrm{deg}(\psi). \textrm{deg(Img }\psi)
\end{equation}  
where Img $\psi$ denotes the image of the morphism $\psi$. 
As the rank of the vector bundle is two, so in the cohomology ring of $\mathbb{P}(E)$ the relation $$\xi^{2}= \pi^{\ast}(c_{1}(E) )\xi- \pi^{\ast}(c_{2}(E))$$ holds, where $c_{i}$ are the Chern classes of vector bundle $E$, for $i\in\{1,2\}$. From the above equation we have $$ \xi^{3}= \pi^{\ast}(c_{1}(E)) \xi^{2}- \pi^{\ast}(c_{2}(E)) \xi.$$  Using this equation it is easy to see that $$ \xi^{3} = 2 = \textrm{deg}(\psi). \textrm{deg(Img} \medspace \psi)$$  as $\psi$ is non-degenerate map  we have deg(Img $\psi)=2$, i.e., image of $\psi$ is a quardic in $\mathbb{P}^{4}$. 
\end{proof}
Our next aim is to show that $S$ is a smooth quadric. To show this, we will prove a lemma and a theorem.

\begin{lemma}\label{lemma:lemma 2.4}
Let \lq$s$\rq \medspace be the  unique section of $E(-1)$ and \lq$\tilde{s}$\rq \medspace be the corresponding section of $\mathcal{O}_{\mathbb{P}(E(-1))}(1)$, $(\tilde{s})_{0} \subseteq \mathbb{P}(E(-1)) \cong \mathbb{P}(E)$  then $\psi $  maps $(\Tilde{s})_{0}$ to a line $\ell$ in $\mathbb{P}^{4}.$ 
\end{lemma}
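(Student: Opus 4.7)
The plan is to combine a concrete description of $\Sigma:=(\tilde s)_0$ with the short exact sequence of Remark~\ref{remark:remark 2.2} tensored by $\mathcal{O}_{\mathbb{P}^2}(1)$, so as to show that $\psi|_\Sigma$ factors through a $2$-dimensional linear system whose image is exactly the line.

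First I would read off $\Sigma$ from the tautological quotient $\pi^*E(-1)\twoheadrightarrow \mathcal{O}_{\mathbb{P}(E(-1))}(1)$: by definition $\tilde s$ is the image of $\pi^*s$, so a point $(x,[\alpha])\in\mathbb{P}(E(-1))$ lies in $\Sigma$ iff $\alpha(s(x))=0$. By Lemma~\ref{lemma:lemma 2.1}, for $x\neq P$ the vector $s(x)$ is nonzero in the rank-two fibre $E(-1)_x$, which forces $\ker\alpha=\langle s(x)\rangle$ and yields a single point over $x$; over $P$ the entire fibre $\pi^{-1}(P)\cong \mathbb{P}^1$ lies in $\Sigma$. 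Since $\tilde s$ is not the zero section, $\Sigma$ is an effective Cartier divisor on the smooth threefold $\mathbb{P}(E(-1))$, hence of pure dimension two, so $\Sigma$ is the closure of the above rational section of $\pi$.

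Next I would tensor the sequence of Remark~\ref{remark:remark 2.2} by $\mathcal{O}_{\mathbb{P}^2}(1)$ to obtain
$$0\longrightarrow \mathcal{O}_{\mathbb{P}^2}(1)\xrightarrow{\;\cdot s\;} E\longrightarrow \mathcal{I}_P(1)\longrightarrow 0,$$
which on cohomology produces a $3$-dimensional subspace $H^0(\mathcal{O}_{\mathbb{P}^2}(1))\hookrightarrow H^0(E)$ with $2$-dimensional quotient $H^0(\mathcal{I}_P(1))$. The crucial observation is that every section of $\mathcal{O}_{\mathbb{P}(E)}(1)$ coming from $H^0(\mathcal{O}_{\mathbb{P}^2}(1))$ vanishes identically on $\Sigma$: under the canonical identification $\mathbb{P}(E)\cong \mathbb{P}(E(-1))$ with $\mathcal{O}_{\mathbb{P}(E)}(1)=\mathcal{O}_{\mathbb{P}(E(-1))}(1)\otimes \pi^*\mathcal{O}_{\mathbb{P}^2}(1)$, evaluating $s\cdot\sigma$ at $(x,[\alpha])\in\Sigma$ yields $\alpha(s(x))\otimes\sigma(x)=0$ because $\alpha(s(x))=0$. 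Therefore $\psi|_\Sigma$ is controlled by the $2$-dimensional quotient $H^0(E)/H^0(\mathcal{O}_{\mathbb{P}^2}(1))\cong H^0(\mathcal{I}_P(1))$, and $\psi(\Sigma)$ is contained in the $\mathbb{P}^1\subset\mathbb{P}^4$ cut out by the three vanishing coordinates.

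Finally I would verify that this $\mathbb{P}^1$ is the full image rather than a point. On $\Sigma\setminus\pi^{-1}(P)$ the birational projection to $\mathbb{P}^2\setminus\{P\}$ intertwines the restricted map with the two linear forms on $\mathbb{P}^2$ that span $H^0(\mathcal{I}_P(1))$, i.e.\ with the linear projection $\mathbb{P}^2\dashrightarrow \mathbb{P}^1$ from $P$, which is manifestly surjective. Hence $\psi(\Sigma)=\ell$ is the desired line. The main obstacle, and the step requiring the most care, is the bookkeeping for the twist $\mathcal{O}_{\mathbb{P}(E)}(1)=\mathcal{O}_{\mathbb{P}(E(-1))}(1)\otimes \pi^*\mathcal{O}_{\mathbb{P}^2}(1)$, so that the vanishing of $s\cdot\sigma$ on $\mathbb{P}(E)$ is correctly read off from the vanishing of $\alpha\circ s$ at points of $\Sigma\subset \mathbb{P}(E(-1))$.
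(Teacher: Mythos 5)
Your argument is correct, and it reaches the conclusion by a genuinely more elementary route than the paper. The paper first identifies $(\tilde{s})_{0}$ with the blow-up $\widetilde{\mathbb{P}^{2}_{P}}\cong\mathbb{P}(\mathcal{O}_{\mathbb{P}^{1}}\oplus\mathcal{O}_{\mathbb{P}^{1}}(1))$ via [6, Lemma 2.5], then proves the structural statement $\mathcal{O}_{\mathbb{P}(E)}(1)\mid_{(\tilde{s})_{0}}\cong\beta^{*}\mathcal{O}_{\mathbb{P}^{1}}(1)$ by building a surjection $\theta^{*}E\to\theta^{*}\mathcal{O}_{\mathbb{P}^{2}}(1)\otimes\mathcal{O}(-D)$ out of the blow-up surjection $\theta^{*}\mathcal{I}_{P}\to\mathcal{O}(-D)$ and invoking the projection formula and Hartshorne II.7.12; the lemma then follows because the restricted linear system factors through $\beta$. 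You instead work directly with the $5$-dimensional space $H^{0}(E)$, use the same exact sequence $0\to\mathcal{O}_{\mathbb{P}^{2}}(1)\to E\to\mathcal{I}_{P}(1)\to 0$ to split off a $3$-dimensional subspace of sections vanishing identically on $\Sigma$, and check that the residual $2$-dimensional system restricts on $\Sigma\setminus\pi^{-1}(P)\cong\mathbb{P}^{2}\setminus\{P\}$ to the projection from $P$, hence surjects onto a line. Both proofs pivot on the same input (the unique section with $(s)_{0}=\{P\}$ and $\dim H^{0}(\mathcal{I}_{P}(1))=2$), but yours avoids the blow-up identification, the projection formula, and II.7.12 entirely, at the cost of the careful twist bookkeeping you flag; the paper's version buys the explicit Hirzebruch-surface description of $(\tilde{s})_{0}$ and of $\mathcal{O}_{\mathbb{P}(E)}(1)$ on it, which is reused later (e.g.\ in Corollary \ref{corollary:Corollary 2.9}, where $(\tilde{s})_{0}$ is the exceptional divisor contracted by $\psi$), while your version has the advantage of exhibiting $\ell$ concretely as $\{X_{0}=X_{1}=X_{2}=0\}$, matching the coordinates used in Sections 3 and 4. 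One small point worth making explicit in your write-up: since $\Sigma$ is a Cartier divisor it is pure of dimension two, so the fibre $\pi^{-1}(P)$ is contained in the closure of the graph over $\mathbb{P}^{2}\setminus\{P\}$ and contributes nothing new to the set-theoretic image.
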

\begin{proof}
By Lemma \ref{lemma:lemma 2.1} we have  $(s)_{0}= \{P\} $ with $P\in \mathbb{P}^{2}$. Since $$ H^{0}(\mathbb{P}^{2}, E(-1))\cong H^{0}(\mathbb{P}(E(-1)), \mathcal{O}_{\mathbb{P}(E(-1))}(1)) .$$  Let \lq $\tilde{s}$\rq \medspace be the corresponding section to \lq
$s$\rq \medspace of $\mathcal{O}_{\mathbb{P}(E(-1))}(1)$.  By [\ref {6}, Lemma 2.5], $(\Tilde{s})_{0}$ is the blow-up of $\mathbb{P}^2 $ along the point $P$ i.e.,  $$(\tilde{s})_{0}=\widetilde{\mathbb{P}^{2}_{P} }\xrightarrow[]{\theta} \mathbb{P}^2.$$  Since we know that $$\widetilde{\mathbb{P}^{2}_{P}} \cong \mathbb{P}(\mathcal{O}_{\mathbb{P}^{1}} \Oplus \mathcal{O}_{\mathbb{P}^{1}}(1)) \xrightarrow[]{\beta} \mathbb{P}^{1}$$  is projective bundle over $\mathbb{P}^{1}$. To show the claim it is enough to show that $$\mathcal{O}_{\mathbb{P}(E)}(1) \mid_{(\Tilde{s})_{0}}\cong \beta^{\ast}(\mathcal{O}_{\mathbb{P}^{1}}(1)).$$ By projection formula we have  $$R^{i}\theta_{\ast}(\theta^{\ast}(\mathcal{O}_{\mathbb{P}^2}(1))\Otimes \mathcal{O}_{\widetilde{\mathbb{P}^{2}_{P}}}(-D))\cong R^{i}\theta_{\ast}(\mathcal{O}_{\widetilde{\mathbb{P}^{2}_{P}}}(-D)) \Otimes \mathcal{O}_{\mathbb{P}^2}(1)$$ where $D$ is the exceptional curve. Since $R^{i}\theta_{\ast}(\mathcal{O}_{\widetilde{\mathbb{P}^{2}_{P}}}(-D)) =  0$, for all $i>0$ so by [\ref{9}, Chapter III.9, Exercise 8.1], we have  $$ H^{0}(\medspace \widetilde{\mathbb{P}^{2}_{P}}, \theta^{\ast}(\mathcal{O}_{\mathbb{P}^2}(1))\Otimes \mathcal{O}_{\widetilde{\mathbb{P}^{2}_{P}}}(-D))\cong H^{0}(\mathbb{P}^2, \mathcal{I}_{P}(1))$$ where $\mathcal{I}_{P}$ is the ideal sheaf of $P\in \mathbb{P}^2$. As dim $H^{0}(\mathbb{P}^2, \mathcal{I}_{P}(1)) = 2 $,  we have the following diagram

\begin{center}
\begin{tikzcd}

\widetilde{\mathbb{P}^2_{P}}\cong\mathbb{P}(\mathcal{O}_{\mathbb{P}^1}\Oplus\mathcal{O}_{\mathbb{P}^1}(1))\arrow[rd, "\beta"] \arrow[d,"\theta"]
 \\
\mathbb{P}^2 \arrow[r, dashed, ]
& |[, rotate=0]|  \mathbb{P}^1
\end{tikzcd}.
\end{center}
and
$$\theta^{\ast}(\mathcal{O}_{\mathbb{P}^2}(1))\Otimes \mathcal{O}_{\widetilde{\mathbb{P}^{2}_{P}}}(-D) \cong \beta^{\ast}\mathcal{O}_{\mathbb{P}^{1}}(1).$$ By properties of blow-up there is a  surjective map $$ \theta^{\ast}(\mathcal{I}_{P}) \rightarrow \mathcal{O}_{\widetilde{\mathbb{P}^{2}_{P}}}(-D).$$ It will induce a surjective map between $$\theta^{\ast}(\mathcal{I}_{P}(1)) \rightarrow \theta^{\ast}(\mathcal{O}_{\mathbb{P}^2}(1)) \Otimes \mathcal{O}_{\widetilde{\mathbb{P}^{2}_{P}}}(-D).$$ By the exact sequence (1), we have a surjective map $$ \theta^{\ast}(E)\rightarrow \theta^{\ast}(\mathcal{O}_{\mathbb{P}^2}(1)) \Otimes \mathcal{O}_{\widetilde{\mathbb{P}^{2}_{P}}}(-D) .$$ 
By [\ref{9}, Chapter II.7, Proposition 7.12 ], we have the following commutative diagram
\begin{center}
\begin{tikzcd}
\widetilde{\mathbb{P}^2_{P}} \arrow[r, hook]  \arrow[d, "\theta"]
& \mathbb{P}(E) \arrow[ld, "\pi"] \\
\mathbb{P}^{2}
\end{tikzcd}
\end{center}

and
$$\mathcal{O}_{\mathbb{P}(E)}(1) \mid_{(\Tilde{s})_{0}}\cong \beta^{\ast}(\mathcal{O}_{\mathbb{P}^{1}}(1)).$$
Using this, we conclude the lemma.
\end{proof}
\begin{remark}\label{remark:remark 2.5}
The line $ \ell$  is also contained in quadric $ S$.
    
\end{remark}
Now we are ready to prove the Theorem \ref{foo}
\begin{proof}[\textbf{Proof of Theorem \ref{foo}}]
 As $E$ is globally generated, using equation (1), we have a surjective map $$ \mathcal{O}_{\mathbb{P}^2}(1)\Oplus \mathcal{O}_{\mathbb{P}^2}^{\bigoplus 2} \longrightarrow E$$, i.e., we have the following exact sequence 
 \begin{equation}
 0\longrightarrow \mathcal{G} \longrightarrow \mathcal{O}_{\mathbb{P}^2}(1)\Oplus \mathcal{O}_{\mathbb{P}^2}^{\bigoplus 2} \longrightarrow E \longrightarrow 0 
 \end{equation}
 where $\mathcal{G}$ is the kernel of the map  $ \mathcal{O}_{\mathbb{P}^2}(1)\Oplus
 \mathcal{O}_{\mathbb{P}^2}^{\bigoplus 2} \longrightarrow E$. By the Chern class computation we have  $\mathcal{G} \cong \mathcal{O}_{\mathbb{P}^2}(-1)$ i.e., an exact sequence 
$$ 0\longrightarrow \mathcal{O}_{\mathbb{P}^2}(-1) \longrightarrow \mathcal{O}_{\mathbb{P}^2}(1)\Oplus \mathcal{O}_{\mathbb{P}^2}^{\bigoplus 2} \longrightarrow E \longrightarrow 0 .$$ If $M = \mathcal{O}_{\mathbb{P}^2}(1)\Oplus \mathcal{O}_{\mathbb{P}^2}^{\bigoplus 2}$ then $M$  is globally generated  and dim $ H^{0}(\mathbb{P}^2,M)= 5$. Let $\mathbb{P}(M)$ be the projectivization  of $M$, $\mathcal{O}_{\mathbb{P}(M)}(1)$ be the tautological bundle over $\mathbb{P}(M)$. The linear system defined by $\mathcal{O}_{\mathbb{P}(M)}(1)$ is base point free and we have non-degenerate mapping $$\eta : \mathbb{P}(M)  \longrightarrow \mathbb{P}^4  .$$ After twisting $M$ by $\mathcal{O}_{\mathbb{P}^2}(-1)$ we have $ M(-1) \cong \mathcal{O}_{\mathbb{P}^2}\Oplus \mathcal{O}_{\mathbb{P}^2}^{\bigoplus 2}(-1)$, let $v\in H^{0}(\mathbb{P}^2,M(-1)) $ be a unique section of $M(-1)$, \lq$\Tilde{v} $\rq  \medspace be the corresponding section of $\mathcal{O}_{\mathbb{P}(M(-1))}(1)$ and $(\Tilde{v})_{0}= \mathbb{P}(\mathcal{O}_{\mathbb{P}^2}^{\bigoplus 2}(-1) )=\mathbb{P}^2 \times \mathbb{P}^1 \subseteq \mathbb{P}(M(-1))\cong \mathbb{P}(M)$ and $\eta $ maps   $(\Tilde{v})_{0}$ to a line in $\mathbb{P}^4$ as $ \mathcal{O}_{\mathbb{P}(M(-1))}(1)\mid_{\mathbb{P}(\mathcal{O}_{\mathbb{P}^2}^{\bigoplus 2}(-1) )} $ has two independent sections. After twisting the above exact sequence  by $\mathcal{O}_{\mathbb{P}^2}(-1)$  we have the following exact sequence 
$$  0\longrightarrow \mathcal{O}_{\mathbb{P}^2}(-2) \longrightarrow \mathcal{O}_{\mathbb{P}^2}\Oplus \mathcal{O}_{\mathbb{P}^2}^{\bigoplus 2} (-1) \longrightarrow E(-1) \longrightarrow 0 $$   and $\mathbb{P}(E(-1)) \hookrightarrow
\mathbb{P}(M(-1)) $ and $\mathcal{O}_{\mathbb{P}(M(-1))}(1) \mid_{\mathbb{P}(E(-1))} =\mathcal{O}_{\mathbb{P}(E(-1))}(1) $
and unique section \lq$\Tilde{v}$\rq \medspace of $ \mathcal{O}_{\mathbb{P}(M(-1))}(1)$ corresponds to a unique section \lq$\Tilde{s}$\rq \medspace of $\mathcal{O}_{\mathbb{P}(E(-1))}(1)$ and we have $$ (\Tilde{v})_{0} \cap \mathbb{P}(E(-1))= (\Tilde{s})_{0}.$$ Using the following commutative diagram

\begin{center}
\begin{tikzcd}
\mathbb{P}(E(-1))\cong \mathbb{P}(E) \arrow[r, hook   ] \arrow[rd,"\psi"]
& \mathbb{P}(M(-1))\cong \mathbb{P}(M) \arrow[d, "\eta" ] \\
& |[, rotate=0]|  \mathbb{P}^4
\end{tikzcd}
\end{center}
\noindent
we see that $\eta$ maps $(\Tilde{v})_{0} $ to the same line $ \psi((\Tilde{s})_{0})=\ell$ in $\mathbb{P}^4$. As $ \ell\subseteq \mathbb{P}^4$ is a linear variety so by  [\ref{5}, Chapter 9,  Proposition 9.11] we have an isomorphism  $$  \widetilde{\mathbb{P}^4_{\ell}} \cong \mathbb{P}(M) \longrightarrow \mathbb{P}^2$$ and under this isomorphism the blow-up map  $\widetilde{\mathbb{P}^4_{\ell}} \longrightarrow \mathbb{P}^4 $ given by the complete linear system defined by  $\mathcal{O}_{\mathbb{P}(M)}(1)$. So we have the following commutative diagram,

\begin{center}
\begin{tikzcd}
\mathbb{P}(E) \arrow[r, hook,   ] \arrow[d,"\psi"]
& \mathbb{P}(M)\cong \widetilde{\mathbb{P}^4_{\ell}} \arrow[d, "\eta" ] \\
S \arrow[r, hook, ]
& |[, rotate=0]|  \mathbb{P}^4
\end{tikzcd}
\end{center}

and we know $\psi$ is a degree one surjective morphism, so it is a birational morphism. By [\ref{9} Chapter II.7, Theorem 7.17 ]  $\psi$ is a blowing-up map. Using the above diagram and $ \ell \subseteq S $, we conclude that $\mathbb{P}(E) \cong \widetilde{S_{\ell}} $.
\end{proof}
   
\begin{lemma}\label{lemma:lemma 2.7}
Let $ X$ be an irreducible quadric in $\mathbb{P}^4$, and $ X$ is singular along a point. Let $ \ell$ be a line passing through that point, then $ \widetilde{X}_{\ell}$ the blow-up of $X$ along $\ell$ is singular.

\begin{proof}
Let $X$ be an irreducible quadric, which is singular along a point; then, after a change of coordinates, we can assume $X = Z(X_{0}X_{1}-X_{2}X_{3}) \subseteq \mathbb{P}^4$ and the singular point will be  $  [\ 0:0:0:0:1]\ $ and $ \ell $ is a line defined by $X_{0}=0, X_{1}=0, X_{2}=0 $.  Now blow-up of $ \mathbb{P}^4$  along the line $ \ell$ is $$\widetilde{\mathbb{P}^4_{\ell}} =\{  [ X_{0}:X_{1}:X_{2}:X_{3}:X_{4}] \times [ Y_{0}:Y_{1}:Y_{2} ] \in \mathbb{P}^4 \times \mathbb{P}^2  : X_{0}Y_{1}-X_{1}Y_{0}=0,$$  $$X_{0}Y_{2}-X_{2}Y_{0}=0, X_{1}Y_{2}-X_{2}Y_{1}=0   \}$$ 
where $ X_{0}, X_{1},X_{2},X_{3},X_{4}$ are homogeneous coordinates of $\mathbb{P}^4$ and $ Y_{0},Y_{1},Y_{2}$ homogeneous coordinates of $ \mathbb{P}^2$.
Suppose $ \omega : \widetilde{\mathbb{P}^4_{\ell}} \longrightarrow \mathbb{P}^4$ be the blowing-up  then the total transform $ \omega^{-1}(X)  $ defined by the equations $ X_{0}Y_{1}-X_{1}Y_{0}=0,  X_{0}Y_{2}-X_{2}Y_{0}=0, X_{1}Y_{2}-X_{2}Y_{1}=0, X_{0}X_{1}-X_{2}X_{3}=0 $ and the exceptional divisor $$ \omega^{-1}(\ell) \cong \mathbb{P}^1 \times \mathbb{P}^2 . $$ Consider the open set given by $ X_{4}=1 $ and $ Y_{0}=1$ in $ \mathbb{P}^4 \times \mathbb{P}^2$.  We can identify this open set with $ \mathbb{A}^4 \times \mathbb{A}^2 = \mathbb{A}^6$ with affine  coordinates $X_{0},X_{1},X_{2},X_{3}, Y_{1},Y_{2}$. We have intersection $$\widetilde{\mathbb{P}^4_{\ell}} \cap \mathbb{A}^6 = Z( X_{0}Y_{1}-X_{1}, \medspace X_{0}Y_{2}-X_{2}) $$  and we see that  $$ (Z( X_{0}Y_{1}-X_{1}, \medspace X_{0}Y_{2}-X_{2}) \subseteq \mathbb{A}^6)  \cong \mathbb{A}^4 $$  with affine coordinates $ X_{0}, X_{3}, Y_{1}, Y_{2}$. In $\mathbb{A}^4$ part of  strict transform of X i.e., $ \widetilde{X_{\ell}}\cap \mathbb{A}^4$ defined by $ Z (Y_{1}X_{0}-Y_{2}X_{3}) $ which is  singular  at $ (0,0,0,0)$. So $ \widetilde{X_{\ell}}$ is not smooth, as smoothness is a local property.   
          
    \end{proof}
\end{lemma}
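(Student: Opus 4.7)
The plan is to reduce the statement to an explicit local calculation by putting the pair $(X,\ell)$ in a convenient normal form and then exhibiting a singular point of the strict transform in one affine chart.

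First I would use the classification of irreducible singular quadric threefolds: such an $X$, being singular at a unique point, is a projective cone with that point as vertex over a smooth quadric surface in $\mathbb{P}^3$. Choosing coordinates so that the vertex is $[0:0:0:0:1]$ and the base is the Segre quadric $\mathbb{P}^1\times\mathbb{P}^1$, one arrives at the familiar equation $X=\{X_0X_1-X_2X_3=0\}$. Any line $\ell\subset X$ through the vertex is a ruling of the cone and thus corresponds to a point of the base; the subgroup of $\mathrm{PGL}_5$ preserving both $X$ and the vertex acts (through the orthogonal group of the base) transitively on such rulings, so after a further linear change of coordinates I may assume $\ell=\{X_0=X_1=X_2=0\}$.

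Next I would describe the blow-up of $\mathbb{P}^4$ along $\ell$ concretely inside $\mathbb{P}^4\times\mathbb{P}^2$ as the common zero locus of the three $2\times 2$ minors $X_iY_j-X_jY_i$ for $0\le i<j\le 2$. It suffices to exhibit one affine chart in which the strict transform $\widetilde{X}_\ell$ fails to be smooth. The natural choice is the chart $X_4=1,\ Y_0=1$: the minor relations collapse to $X_1=X_0Y_1,\ X_2=X_0Y_2$ (the third minor is automatic), so the chart is simply $\mathbb{A}^4$ with coordinates $(X_0,X_3,Y_1,Y_2)$, and the exceptional divisor is cut out locally by $X_0=0$.

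Finally I would substitute into the equation of $X$: the total transform becomes $X_0X_1-X_2X_3=X_0(X_0Y_1-X_3Y_2)$, so stripping off the exceptional factor $X_0$ gives the strict transform $\widetilde{X}_\ell\cap\mathbb{A}^4=\{X_0Y_1-X_3Y_2=0\}$. All four partial derivatives of $X_0Y_1-X_3Y_2$ vanish at the origin, so $\widetilde{X}_\ell$ acquires an ordinary double point there, and since smoothness is a local property this proves $\widetilde{X}_\ell$ is singular. The only delicate point in the whole argument is justifying the simultaneous normal form for $(X,\ell)$, i.e.\ that after fixing the equation of $X$ enough linear freedom remains to impose $\ell=\{X_0=X_1=X_2=0\}$; this reduces to the classical transitivity of the automorphism group of the smooth base quadric on its points, and once this reduction is granted the rest is a one-line Jacobian check.
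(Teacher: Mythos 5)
Your proposal is correct and follows essentially the same route as the paper: the same normal form $X=Z(X_0X_1-X_2X_3)$ with $\ell=\{X_0=X_1=X_2=0\}$, the same description of the blow-up inside $\mathbb{P}^4\times\mathbb{P}^2$, the same chart $X_4=1,\ Y_0=1$, and the same local equation $X_0Y_1-X_3Y_2=0$ with a node at the origin. The only difference is that you spell out the transitivity argument justifying the simultaneous normal form for $(X,\ell)$, which the paper leaves as an unstated ``after a change of coordinates''; that is a welcome clarification but not a different proof.
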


\begin{corollary}\label{corollary:corollary 2.8}
The quadric S is smooth in $\mathbb{P}^4$.
\end{corollary}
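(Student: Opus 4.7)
The plan is to derive the smoothness of $S$ by a short contradiction argument combining Theorem \ref{foo} with Lemma \ref{lemma:lemma 2.7}. Since $\mathbb{P}(E)$ is a projective bundle over $\mathbb{P}^2$ it is smooth, and by Theorem \ref{foo} we have $\mathbb{P}(E)\cong\widetilde{S_{\ell}}$, so the blow-up $\widetilde{S_{\ell}}$ is smooth. Moreover, the blow-down $\psi\colon\mathbb{P}(E)\to S$ restricts to an isomorphism over $S\setminus\ell$, so $S\setminus\ell$ is smooth and the singular locus of $S$ is contained in $\ell$.

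Next I would assume for contradiction that $S$ is singular. Since $S$ is an irreducible quadric in $\mathbb{P}^4$, its singular locus is a linear subspace, either of dimension $0$ (when $S$ has rank $4$) or of dimension $1$ (when $S$ has rank $3$). In the first case the singular locus is a point $p\in\ell$, and Lemma \ref{lemma:lemma 2.7} applied with this $p$ and $\ell$ gives that $\widetilde{S_{\ell}}$ is singular, contradicting the smoothness of $\mathbb{P}(E)$.

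For the rank-$3$ case, the singular locus is a line which must coincide with $\ell$, and after a linear change of coordinates $S=V(X_2X_3-X_4^2)$ with $\ell=V(X_2,X_3,X_4)$. To rule this case out I would compare the exceptional divisors of the blow-up $\widetilde{S_{\ell}}\to S$ coming from the two descriptions of it. By Lemma \ref{lemma:lemma 2.4}, the exceptional locus of $\psi$ is $(\tilde{s})_{0}\cong\widetilde{\mathbb{P}^{2}_{P}}\cong\mathbb{P}(\mathcal{O}_{\mathbb{P}^1}\oplus\mathcal{O}_{\mathbb{P}^1}(1))$, that is, the Hirzebruch surface $\mathbb{F}_{1}$. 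On the other hand, a short local calculation modeled on the one in the proof of Lemma \ref{lemma:lemma 2.7} but adapted to the rank-$3$ normal form identifies the exceptional divisor of $\widetilde{S_{\ell}}\to S$ with the projectivized tangent cone of $S$ along $\ell$, which is $\ell\times C\cong\mathbb{P}^{1}\times\mathbb{P}^{1}=\mathbb{F}_{0}$, where $C\subset\mathbb{P}^{2}$ is the smooth conic $V(Y_{2}Y_{3}-Y_{4}^{2})$ cut out by the leading form of the equation of $S$. Since $\mathbb{F}_{1}\not\cong\mathbb{F}_{0}$, this contradicts the identification $\mathbb{P}(E)\cong\widetilde{S_{\ell}}$.

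The main obstacle I expect is precisely this rank-$3$ subcase, because Lemma \ref{lemma:lemma 2.7} as stated covers only the case of a singular point, whereas an irreducible quadric in $\mathbb{P}^{4}$ may also be singular along a line; the exceptional-divisor comparison above is what is needed to eliminate that possibility. Once it is in place, the corollary follows immediately from Theorem \ref{foo} and Lemma \ref{lemma:lemma 2.7}.
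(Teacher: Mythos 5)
Your proposal is correct and reaches the conclusion by the same two-case division the paper uses (singular locus a point of $\ell$, handled by Lemma \ref{lemma:lemma 2.7}; singular locus equal to $\ell$, handled by a separate contradiction), but in the second case your argument is genuinely different from the paper's. The paper projects away from $\ell$ to exhibit $S$ as a cone over a smooth conic with vertex line $\ell$, identifies $\widetilde{S_{\ell}}$ with the rational normal scroll $\mathbb{P}(\mathcal{O}_{\mathbb{P}^1}(2)\oplus\mathcal{O}_{\mathbb{P}^1}\oplus\mathcal{O}_{\mathbb{P}^1})$, and then rules out $\mathbb{P}(E)\cong\mathbb{P}(\mathcal{O}_{\mathbb{P}^1}(2)\oplus\mathcal{O}_{\mathbb{P}^1}\oplus\mathcal{O}_{\mathbb{P}^1})$ by comparing Chow rings via an external result ([\ref{3}, Theorem 4.2]). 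You instead compare only the exceptional divisors: $(\tilde{s})_{0}\cong\widetilde{\mathbb{P}^{2}_{P}}\cong\mathbb{F}_{1}$ on one side, and the projectivized normal cone $\ell\times C\cong\mathbb{F}_{0}$ of the rank-$3$ quadric along its vertex line on the other, and conclude from $\mathbb{F}_{1}\not\cong\mathbb{F}_{0}$. This is more self-contained (no appeal to the Chow-ring classification of projective bundles) and is a local version of the same incompatibility the paper detects globally; the price is two small verifications you should make explicit: that the exceptional divisor of the blow-up is indeed the projectivized normal cone $\mathbb{P}(C_{\ell}S)$ and that it is reduced (it is, since the leading form $Y_{2}Y_{3}-Y_{4}^{2}$ cuts out a reduced conic in each fibre of $N_{\ell/\mathbb{P}^4}$), and that $\psi^{-1}(\ell)_{\mathrm{red}}=(\tilde{s})_{0}$, which does not follow from Lemma \ref{lemma:lemma 2.4} alone but does follow from it together with the facts that $\psi$ is an isomorphism off $\psi^{-1}(\ell)$ and that both divisors are irreducible of dimension two. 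With those details in place your argument is a valid alternative to the paper's Case (1).
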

\begin{proof}
From Theorem \ref{foo} we see that $\mathbb{P}(E)$ is a smooth subvariety of $\widetilde {\mathbb{P}^4_{\ell}}$
and also $ \mathbb{P}(E)-\psi^{-1}(\ell) \cong S- \ell $ hence $ S- \ell$ is smooth. If $ S$  is not smooth, it has singularity along the whole $ \ell$ or at some point of $ \ell$.\\\\
$ \underline{Case} (1) $: Suppose $S$ is singular along $\ell$. Consider the projection $$ \mathbb{P}^4 - \ell  \xlongrightarrow{p} \mathbb{P}^2$$  and  $$ p _{\mid_{S-\ell}} : S-\ell \longrightarrow  \mathbb{P}^2 $$ be the restriction of this projection to $ S-\ell $. The image of $  p _{\mid_{S-\ell}} $ is a smooth conic  $C$ in $ \mathbb{P}^2$ as $S-\ell$ is smooth. $S$ is a cone over  conic $C$ with vertex along $\ell$
 and by information given on rational normal scrolls in [\ref{1}, page number 95, 96] we see that  
 $ \widetilde{S_{\ell}} \cong \mathbb{P}(\mathcal{O}_{\mathbb{P}^1}(2)\Oplus \mathcal{O}_{\mathbb{P}^1} \Oplus \mathcal{O}_{\mathbb{P}^1})$. Projection $ p$ gives the  morphism $ p_{1} : \widetilde {\mathbb{P}^4_{\ell}} \longrightarrow \mathbb{P}^2 $ and $p_{1} $ is the natural projection $ \mathbb{P}(M)\longrightarrow \mathbb{P}^2$ as $ \mathbb{P}(M) \cong \widetilde {\mathbb{P}^4_{\ell}} $. Now 
by Theorem \ref{foo} we get $\widetilde{S_{\ell}} \cong \mathbb{P}(E) $. So we have $ \mathbb{P}(E) \cong \mathbb{P}(\mathcal{O}_{\mathbb{P}^1}(2)\Oplus \mathcal{O}_{\mathbb{P}^1} \Oplus \mathcal{O}_{\mathbb{P}^1})$. But  this is not possible as by [\ref{3}, Theorem 4.2  ] Chow rings  $$ A^{\ast}(\mathbb{P}(E)) \ncong A^{\ast}(\mathbb{P}(\mathcal{O}_{\mathbb{P}^1}(2)\Oplus \mathcal{O}_{\mathbb{P}^1} \Oplus \mathcal{O}_{\mathbb{P}^1})). $$  So $S$ can not be singular along $ \ell$.\\
\\
$ \underline{Case} $ (2): By Lemma \ref{lemma:lemma 2.7},  $ S$ can not be singular along a point on $ \ell $ as $ \mathbb{P}(E)\cong \widetilde{S_{\ell}}$ is smooth. 
\end{proof}

\begin{corollary}\label{corollary:Corollary 2.9}
The vector bundle $ E $ is nef and big but not ample.
\end{corollary}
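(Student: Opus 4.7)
The plan is to translate the positivity conditions on the rank-two bundle $E$ into positivity conditions for $\xi = \mathcal{O}_{\mathbb{P}(E)}(1)$, and then read off each conclusion from what we have already established about the morphism $\psi:\mathbb{P}(E)\to\mathbb{P}^{4}$.

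\textbf{Nef.} First I would recall that, by definition, $E$ is nef (resp.\ big, resp.\ ample) iff $\xi$ is nef (resp.\ big, resp.\ ample). Since $E$ is globally generated (Lemma \ref{lemma:lemma 2.1} and the surrounding discussion), so is $\xi$; in particular $\xi$ has nonnegative degree on every curve in $\mathbb{P}(E)$, so $\xi$, hence $E$, is nef.

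\textbf{Big.} Because $\xi$ is nef, bigness of $\xi$ is equivalent to the top self-intersection $\xi^{3}$ being strictly positive. But this was already computed in the proof of Theorem \ref{theorem:theorem 2.3}: $\xi^{3}=2>0$. Alternatively, by Theorem \ref{foo} the morphism $\psi$ factors as the blow-up $\mathbb{P}(E)\to S$ followed by the inclusion $S\hookrightarrow\mathbb{P}^{4}$, and a blow-up is birational, so $\psi$ is a birational morphism onto its image; a nef line bundle defining a birational map is automatically big.

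\textbf{Not ample.} This is where the geometry from Theorem \ref{foo} does the work. Since $\psi$ is the composition of the blow-up $\mathbb{P}(E)\to S$ along $\ell$ with $S\hookrightarrow\mathbb{P}^{4}$, we have $\xi=\psi^{\ast}\mathcal{O}_{\mathbb{P}^{4}}(1)=(\text{blow-up})^{\ast}\mathcal{O}_{S}(1)$. The blow-up contracts every fibre $F\cong\mathbb{P}^{1}$ of $\psi^{-1}(\ell)\to\ell$ to a point of $\ell$, so $\xi\mid_{F}\cong\mathcal{O}_{F}$ has degree zero on such an $F$. An ample line bundle has strictly positive degree on every curve, so $\xi$ is not ample, and hence $E$ is not ample.

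The only subtle point is making sure, in the last step, that some curve really is contracted by $\psi$; but this is immediate from Theorem \ref{foo}, since the exceptional divisor of the blow-up $\mathbb{P}(E)\cong\widetilde{S_{\ell}}\to S$ is a $\mathbb{P}^{1}$-bundle over $\ell$ whose fibres are genuinely collapsed. I expect no further obstacles.
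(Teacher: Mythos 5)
Your proposal is correct and follows essentially the same route as the paper: nefness from global generation, bigness from $\psi$ being birational onto its image (the paper uses this rather than the $\xi^{3}=2>0$ computation, though both work), and non-ampleness from the fact that $\psi$ collapses the exceptional locus over $\ell$. The only cosmetic difference is that you exhibit an explicit contracted curve with $\xi\cdot F=0$, whereas the paper phrases the same obstruction as $\psi$ failing to be finite because it maps the surface $(\tilde{s})_{0}=\widetilde{\mathbb{P}^2_{P}}$ onto a line.
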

\begin{proof}
As E is globally generated, hence it is nef. From Theorem \ref{foo} we see that the map given by the linear system $| \mathcal{O}_{\mathbb{P}(E)}(1) |$ is a blow-up map and it maps exceptional divisor $ (\tilde{s})_{0}= \widetilde{\mathbb{P}^2_{P}}$ to a line. So, it can not be a finite map, so E is not an ample vector bundle. As the map $\psi$ defined by $ \mathcal{O}_{\mathbb{P}(E)}(1)$ is birational onto its image so $E$  is big. 
 
\end{proof}
\begin{remark}
The vector bundle, $E$, is semi-stable and big but not ample over $ \mathbb{P}^2$, but a semi-stable big vector bundle is ample over a smooth projective curve. 
\end{remark}
\begin{remark}
The converse of the [\ref{10}, Theorem 3 ] is not true as the Chern classes of vector bundle $E$ satisfy the relation $c_{2}\gtr 0,c_{1}^{2}-c_{2}\gtr 0 $ but $E$ is not an ample bundle.
\end{remark}
\begin{remark}
  The projective variety $\widetilde{S_{\ell}}  $ is a fano variety. Since, by Theorem \ref{foo} we have $ \widetilde{S_{\ell}}\cong \mathbb{P}(E)$ and it is easy to see that anti-canonical bundle of $\mathbb{P}(E)$ i.e.$$ \Check{K_{\mathbb{P}(E)}}= \mathcal{O}_{\mathbb{P}(E)}(2) \bigotimes \pi^{\ast} \mathcal{O}_{\mathbb{P}^2}(1) $$ is ample bundle.
    \end{remark}

\section{Correspondence between Projective bundles over \texorpdfstring{$ \mathbb{P}^2 $}{Text} and smooth quadrics in \texorpdfstring{$\mathbb{P}^4$}{Text}}

Now consider the following morphism of locally free sheaves  $$ \mathcal{O}_{\mathbb{P}^2}\xrightarrow{\alpha} \mathcal{O}_{\mathbb{P}^2}(2)\Oplus\mathcal{O}_{\mathbb{P}^2}^{\bigoplus 2}(1)$$
 $$ 1\mapsto (f,g,h)$$
where $f\in H^{0}(\mathbb{P}^2,\mathcal{O}_{\mathbb{P}^2}(2))$ and $ g,h \in H^{0}(\mathbb{P}^2, \mathcal{O}_{\mathbb{P}^2}(1))$ such that $Z(f,g,h)= \varnothing$. Then we have the following exact sequence 
\begin{equation}
0\longrightarrow \mathcal{O}_{\mathbb{P}^2}\xlongrightarrow{\alpha} \mathcal{O}_{\mathbb{P}^2}(2)\Oplus \mathcal{O}_{\mathbb{P}^2}^{\bigoplus 2}(1) \longrightarrow N \longrightarrow 0 
\end{equation}
where  $ N$ is cokernel, and it's a rank two vector bundle. Tensoring the exact sequence  by $\mathcal{O}_{\mathbb{P}^2}(-1)$ we  get the following sequence  
\begin{equation}
0\longrightarrow \mathcal{O}_{\mathbb{P}^2}(-1)\xlongrightarrow{\alpha} \mathcal{O}_{\mathbb{P}^2}(1)\Oplus \mathcal{O}_{\mathbb{P}^2}^{\bigoplus 2} \longrightarrow V \longrightarrow 0 
\end{equation}
where $V \cong 
N(-1) $ a rank two vector bundle over $\mathbb{P}^2$.
Let $\mathbb{P}(V)$ be the projectivization of $V$ and  $$ \tau:\mathbb{P}(V) \longrightarrow \mathbb{P}^2$$ be the natural projection 
and  $\mathcal{O}_{\mathbb{P}(V)}(1)$ be the tautological bundle over $\mathbb{P}(V)$. Since $$ H^{0}(\mathbb{P}^2, V)\cong H^{0} (\mathbb{P}(V), \mathcal{O}_{\mathbb{P}(V)}(1)) $$  we conclude that dim $H^{0}(\mathbb{P}(V),\mathcal{O}_{\mathbb{P}(V)}(1)) =5$  and $\mathcal{O}_{\mathbb{P}(V)}(1)$ is globally generated as $V$ is globally generated. So the linear system defined by  $\mathcal{O}_{\mathbb{P}(V)}(1)$ is base point free. Thus we have a non-degenerate mapping $$\rho:\mathbb{P}(V) \longrightarrow \mathbb{P}^ 4.$$ 

\begin{theorem}\label{theorem:theorem 3.1}
The image of $ \rho $, Img $(\rho)$ is a smooth quadric $ F_{\alpha} $ in $ \mathbb{P}^4$. 
\end{theorem}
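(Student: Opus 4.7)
The plan is to reuse the two-step template established for $E$ in Section 2: first pin down the degree of the image by the same Chern-class intersection argument as in Theorem \ref{theorem:theorem 2.3}, and then upgrade the quadric to a smooth one by realising $\mathbb{P}(V)$ as a blow-up of $F_{\alpha}$ along a line and invoking Lemma \ref{lemma:lemma 2.7}, mimicking Corollary \ref{corollary:corollary 2.8}.

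First, I would extract the Chern classes of $V$ from the exact sequence $(4)$: multiplicativity gives
\[
c(V) \;=\; \frac{1+h}{1-h} \;=\; 1 + 2h + 2h^{2},
\]
so $c_{1}(V) = 2h$ and $c_{2}(V) = 2h^{2}$ — identical to the Chern classes of $E$. Setting $\xi = \mathcal{O}_{\mathbb{P}(V)}(1)$ and using the Grothendieck relation $\xi^{2} = \tau^{\ast}c_{1}(V)\cdot\xi - \tau^{\ast}c_{2}(V)$, the same calculation as in Theorem \ref{theorem:theorem 2.3} gives $\xi^{3} = 2$. Because $\rho$ is non-degenerate, its image is not contained in a hyperplane and has degree at least two, so the formula $\xi^{3} = \deg(\rho)\cdot\deg(\mathrm{Img}\,\rho)$ forces $\deg(\rho)=1$ and $\deg(\mathrm{Img}\,\rho)=2$: $F_{\alpha}$ is a quadric and $\rho$ is birational onto it.

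For smoothness I would transplant the diagram chase of Theorem \ref{foo} almost verbatim. Writing $M = \mathcal{O}_{\mathbb{P}^{2}}(1) \oplus \mathcal{O}_{\mathbb{P}^{2}}^{\oplus 2}$, sequence $(4)$ embeds $\mathbb{P}(V)$ as a divisor in $\mathbb{P}(M) \cong \widetilde{\mathbb{P}^{4}_{\ell}}$ and makes $\rho$ the restriction of the natural map $\eta\colon \mathbb{P}(M) \to \mathbb{P}^{4}$. The unique section of $M(-1) \cong \mathcal{O} \oplus \mathcal{O}(-1)^{\oplus 2}$ has zero locus $\mathbb{P}(\mathcal{O}(-1)^{\oplus 2}) \cong \mathbb{P}^{2}\times\mathbb{P}^{1}$, which $\eta$ sends onto the fixed line $\ell$; intersecting with $\mathbb{P}(V(-1))$ gives the zero locus of the corresponding unique section of $V(-1)$, and the commutative square of Theorem \ref{foo} then identifies $\mathbb{P}(V)$ with $\widetilde{(F_{\alpha})_{\ell}}$. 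Since $\mathbb{P}(V)$ is smooth, Lemma \ref{lemma:lemma 2.7} immediately excludes $F_{\alpha}$ having an isolated singularity on $\ell$, and the remaining possibility that $F_{\alpha}$ is a cone over a smooth conic with vertex line $\ell$ is ruled out by the Chow-ring obstruction from Case (1) of Corollary \ref{corollary:corollary 2.8}: such a cone would give $\widetilde{(F_{\alpha})_{\ell}} \cong \mathbb{P}(\mathcal{O}_{\mathbb{P}^{1}}(2) \oplus \mathcal{O}_{\mathbb{P}^{1}}^{\oplus 2})$, a $\mathbb{P}^{2}$-bundle over $\mathbb{P}^{1}$ whose Chow ring is incompatible with that of the $\mathbb{P}^{1}$-bundle $\mathbb{P}(V) \to \mathbb{P}^{2}$.

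The main obstacle I expect is verifying the cone-exclusion uniformly in $\alpha$. In Section 2 the bundle $E$ was a single concrete semistable bundle, so the Chow-ring comparison was essentially automatic; here $V$ varies with $(f,g,h)$ and one must check that the obstruction is genuine for every admissible $\alpha$. The key structural observation is that the pulled-back base class $\tau^{\ast}h$ is nilpotent of order three in $A^{\ast}(\mathbb{P}(V))$, whereas the analogous base generator in a $\mathbb{P}^{2}$-bundle over $\mathbb{P}^{1}$ is nilpotent of order two; this dichotomy is independent of $\alpha$, so the argument should survive for every triple with $Z(f,g,h)=\varnothing$, with no extra genericity hypothesis beyond what is already built into the setup, and thus sets the stage for the fuller blow-up statement in Theorem \ref{baz}.
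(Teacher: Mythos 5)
Your argument is correct, but it is not the route the paper takes for this theorem. The paper's proof is short and equation-based: it uses the multiplication map $\textrm{Sym}^{2}H^{0}(\mathbb{P}(V),\mathcal{O}_{\mathbb{P}(V)}(1))\to H^{0}(\mathbb{P}(V),\mathcal{O}_{\mathbb{P}(V)}(2))$ together with the defining relation $1\mapsto (f,g,h)$ of $\alpha$ to observe that $f+Ug+Wh$ lies in the kernel, so $\mathrm{Img}(\rho)\subseteq Z(f+Ug+Wh)$; the codimension count (your $\xi^{3}=2$ computation, which the paper borrows from Theorem \ref{theorem:theorem 2.3}) plus irreducibility then force equality, and smoothness is a direct Jacobian-criterion check in which the hypothesis $Z(f,g,h)=\varnothing$ (which also forces $g,h$ to be linearly independent) is exactly what kills all candidate singular points. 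Your proof instead transplants the machinery of Theorem \ref{foo} and Corollary \ref{corollary:corollary 2.8}: identify $\mathbb{P}(V)$ with the strict transform of $F_{\alpha}$ in $\widetilde{\mathbb{P}^{4}_{\ell}}\cong\mathbb{P}(M)$, then exclude a point singularity by Lemma \ref{lemma:lemma 2.7} and the cone case by the Chow-ring obstruction. This is legitimate --- indeed Remark 3.2 of the paper explicitly notes that the two proofs are interchangeable for the quadric case --- and your check that the obstruction is independent of $\alpha$ (the Chern classes of $V$ are always $2h$, $2h^{2}$) is the right thing to worry about, though the cleaner invariant is that no nonzero degree-one class of $A^{\ast}(\mathbb{P}(V))=\mathbb{Z}[h,\xi]/(h^{3},\,\xi^{2}-2h\xi+2h^{2})$ squares to zero, whereas the base class of a $\mathbb{P}^{2}$-bundle over $\mathbb{P}^{1}$ does; "the base generator has a different nilpotency order" is not by itself preserved under an abstract ring isomorphism. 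What the paper's approach buys is the explicit equation $f+Ug+Wh=0$ of $F_{\alpha}$, which is essential later: in Theorem \ref{theorem:theorem 4.4} the degree-$n$ analogue is \emph{singular} along $\ell$, so the blow-up-plus-Lemma-\ref{lemma:lemma 2.7} strategy cannot be reused there, while the Jacobian computation on the explicit equation still applies. What your approach buys is independence from the coordinate presentation and a uniform treatment with Section 2, at the cost of heavier inputs (the rational normal scroll description and the external Chow-ring result of reference [3]).
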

\begin{proof}
Suppose $H^{0}(\mathbb{P}^2, V)= H^{0}(\mathbb{P}(V), \mathcal{O}_{\mathbb{P}(V)}(1))= \langle X_{0}, X_{1}, X_{2},U,W \rangle$.
 We have  surjective morphism  $$  \textrm{Sym}^{2}H^{0}(\mathbb{P}(V),\mathcal{O}_{\mathbb{P}(V)}(1))\xlongrightarrow{\Psi} H^{0}(\mathbb{P}(V),\mathcal{O}_{\mathbb{P}(V)}(2)) .$$ We know that  $$H^{0}(\mathbb{P}^2, \textrm{Sym}^{2}V)= H^{0}(\mathbb{P}(V),\mathcal{O}_{\mathbb{P}(V)}(2)) = H^{0}(\mathbb{P}(V),\rho^{\ast}(\mathcal{O}_{\mathbb{P}^4}(2)))$$ and in exact sequence  (4) $\alpha$ is defined by $1\rightarrow (f,g,h)$ so we see that $ f + Ug+ Wh \in$ Ker$ (\Psi)$ i.e., $ f + Ug+ Wh =0$ along the Img $(\rho )$. So we have Img $(\rho) \subseteq  Z(f + Ug+ Wh) $. By Theorem \ref{theorem:theorem 2.3} Img $(\rho)$ is codimension  one subvariety of $\mathbb{P}^4$. Using the Jacobian criterion, we see that $ Z(f+Ug+Wh)$ is a smooth quadric and hence irreducible. So by the dimension and irreducibility of Img $(\rho) $ and $Z(f + Ug+ Wh)$   we  conclude that Img $(\rho) = F_{\alpha} = Z(f + Ug+ Wh)$. So, $F_{\alpha}$ is a smooth quadric in $ \mathbb{P}^4$. 
    
\end{proof}
\begin{remark}
From (3), we see that $E$ fits into the exact sequence of the form (5), then Theorem \ref{theorem:theorem 3.1} gives an alternate proof of Corollary \ref{corollary:corollary 2.8}.

\end{remark}

\begin{example}

In the above theorem if we choose  $f= X_{0}^2$, $ g=X_{1}$ and $h= X_{2} $ then the image of $ \rho$ is $ X_{0}^{2} + UX_{1}+ WX_{2}$ which is clearly a smooth quadric in $ \mathbb{P}^4$.
    
\end{example}

If we choose $g$, $h$ such that  $Z(g,h) = \{{P}\}$ where $P\in \mathbb{P}^2$ is such that $(s)_{0}=P$, $s\in H^{0}(\mathbb{P}^2, E(-1))$ is the unique section then we have the following proposition.
\begin{proposition}\label{propostion:proposition 3.4}
 Let $V$ be as above, then $V$ is semistable but not stable, and $V(-1) $ has a unique section \lq $t$\rq \medspace such that $(t)_{0}$ is a single point $P \in \mathbb{P}^2$ and $V \cong E $.
\end{proposition}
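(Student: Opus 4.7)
The plan is to deduce the three claims from one main identification: match $V(-1)$ with $E(-1)$ by realizing both as the (unique up to scalar) non-split extension of $\mathcal{I}_P$ by $\mathcal{O}_{\mathbb{P}^2}$. Semistability then transfers for free, and the required section $t$ is produced along the way.

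First I would twist the defining exact sequence (5) by $\mathcal{O}_{\mathbb{P}^2}(-1)$ and take cohomology. Since $H^0(\mathcal{O}_{\mathbb{P}^2}(-1)) = 0$ and $H^1(\mathcal{O}_{\mathbb{P}^2}(-2)) = 0$, the long exact sequence collapses to give $H^0(\mathbb{P}^2, V(-1)) \cong H^0(\mathbb{P}^2, \mathcal{O}_{\mathbb{P}^2}) = \mathbb{C}$, so there is a unique (up to scalar) section $t$. A standard Chern-class computation from (5) gives $c_1(V(-1)) = 0$ and $c_2(V(-1)) = [h^2]$, matching those of $E(-1)$. The section $t$ is the image of the constant section of the $\mathcal{O}_{\mathbb{P}^2}$-summand of the middle term of the twisted sequence; it vanishes at a point $x$ precisely when $(1,0,0)$ lies in the kernel fiber at $x$, which is spanned by $(f(x), g(x), h(x))$. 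This forces $g(x) = h(x) = 0$, so $(t)_0 = Z(g,h) = \{P\}$ by the hypothesis on $(g,h)$.

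Next, exactly as in Lemma \ref{lemma:lemma 2.1}, I would apply [\ref{2}, Lemma 3] to $t$ to obtain the exact sequence
\begin{equation*}
0 \longrightarrow \mathcal{O}_{\mathbb{P}^2} \longrightarrow V(-1) \longrightarrow \mathcal{I}_P \longrightarrow 0,
\end{equation*}
parallel to the one for $E(-1)$ in Remark \ref{remark:remark 2.2}. The key step is to show $\operatorname{Ext}^1(\mathcal{I}_P, \mathcal{O}_{\mathbb{P}^2}) \cong \mathbb{C}$: from $0 \to \mathcal{I}_P \to \mathcal{O}_{\mathbb{P}^2} \to \mathcal{O}_P \to 0$ together with $H^i(\mathcal{O}_{\mathbb{P}^2}) = 0$ for $i > 0$, one gets $\operatorname{Ext}^1(\mathcal{I}_P, \mathcal{O}_{\mathbb{P}^2}) \cong \operatorname{Ext}^2(\mathcal{O}_P, \mathcal{O}_{\mathbb{P}^2})$, and by Serre duality on $\mathbb{P}^2$ this is one-dimensional. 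The split extension $\mathcal{O}_{\mathbb{P}^2} \oplus \mathcal{I}_P$ is not locally free at $P$, whereas $V(-1)$ and $E(-1)$ both are; hence each corresponds to the unique non-zero class in $\mathbb{P}(\operatorname{Ext}^1(\mathcal{I}_P, \mathcal{O}_{\mathbb{P}^2}))$, so $V(-1) \cong E(-1)$, and twisting back yields $V \cong E$.

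Finally, semistability transfers via this isomorphism: by [\ref{4}, Theorem 5.1] $E$ is semistable, and tensoring the sequence of Remark \ref{remark:remark 2.2} by $\mathcal{O}_{\mathbb{P}^2}(1)$ exhibits a line subbundle $\mathcal{O}_{\mathbb{P}^2}(1) \hookrightarrow E$ of slope $1 = \mu(E)$, so $E$ is not stable. Hence the same holds for $V$. The only real obstacle is the Ext computation, which is routine provided one is careful with Serre duality and with ruling out the split extension on local-freeness grounds; everything else is bookkeeping with the cohomology of line bundles on $\mathbb{P}^2$.
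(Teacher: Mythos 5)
Your proposal is correct and follows essentially the same route as the paper: compute $h^{0}(V(-1))=1$ and the Chern classes from the twisted sequence, identify $(t)_{0}=Z(g,h)=\{P\}$, and conclude $V(-1)\cong E(-1)$ from $\operatorname{Ext}^{1}(\mathcal{I}_{P},\mathcal{O}_{\mathbb{P}^2})\cong \mathbb{C}$ together with local freeness ruling out the split extension. The only minor divergence is that the paper establishes semistability of $V$ directly, via the normalized-bundle criterion of [\ref{11}, Chapter 2, Lemma 1.2.5] and the vanishing $H^{0}(\mathbb{P}^2,V(-2))=0$, whereas you transfer it through the isomorphism $V\cong E$; both arguments are sound.
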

\begin{proof}
Using the above exact sequence (5) we have $$c_{1}(V)= 2[h]\quad,\quad c_{2}(V)=2[h^2] $$ 
and after twisting the exact sequence (5) by $\mathcal{O}_{\mathbb{P}^2}(-2) $ we have
\begin{equation}
0\longrightarrow \mathcal{O}_{\mathbb{P}^2}(-1)\xlongrightarrow{\alpha} \mathcal{O}_{\mathbb{P}^2}\Oplus \mathcal{O}_{\mathbb{P}^2}^{\bigoplus 2}(-1) \longrightarrow V(-1) \longrightarrow 0 .
\end{equation}
Using the above exact sequence, we conclude that

$$c_{1}(V(-1))= 0 \quad,\quad c_{2}(V(-1))=[h^2] $$

and $H^{0}(\mathbb{P}^2, V(-1))= 1$. So by [\ref{11} Chapter 2, Lemma 1.2.5], $V(-1)$  can not be a stable bundle as  $V(-1)$ is a normalized bundle and also since  $H^{0}(\mathbb{P}^2, V(-2))= 0$ so  $V(-1)$ is semi-stable. Thus, we see that $V$ is not stable but semi-stable. Let \lq$t$\rq \medspace be the unique section of $V(-1) $. As $ \alpha$ defined by $ 1\rightarrow (f,g,h) $ and $ Z(g,h)= P $ so by exact sequence (6), we see that $ P\in (t)_{0} $ and by  Lemma \ref{lemma:lemma 2.1}, we have $(t)_{0}= P $. Using  Remark \ref{remark:remark 2.2}, we have the following exact sequence  $$ 0\longrightarrow \mathcal{O}_{\mathbb{P}^2} \longrightarrow V(-1) \longrightarrow \mathcal{I}_{\{ P \}} \longrightarrow 0 .$$  Since   $ Ext^{1}(\mathcal{I}_{P},\mathcal{O}_{\mathbb{P}^2})\cong \mathbb{C} $ from this we conclude  $V(-1)\cong E(-1) $ i.e., $ V \cong E$.
\end{proof}

\begin{proof}[\textbf{Proof of Theorem \ref{baz}}] 
Let $M^{'} = \mathcal{O}_{\mathbb{P}^2}(2)\Oplus \mathcal{O}_{\mathbb{P}^2}^{\bigoplus 2}(1)$ be the vector bundle and  $ W=H^{0}(M^{'})$.  Firstly, we will show the existence of an open set $U$. We have a surjective morphism of locally free sheaves $$W \Otimes \mathcal{O}_{\mathbb{P}^2} \xlongrightarrow{\varepsilon } M^{'}. $$ Then we have the following exact sequence

$$ 0\longrightarrow K \longrightarrow W \Otimes \mathcal{O}_{\mathbb{P}^2} \xlongrightarrow{\varepsilon } 
M^{'}\longrightarrow 0 $$
where $K$ is the kernel of $\varepsilon $. From the above exact sequence, we have $ \mathbb{P}(K) \subseteq \mathbb{P}( W \Otimes M^{'})= \mathbb{P}^2 \times \mathbb{P}(W)$ and $$ \mathbb{P}(K) =\{ (x,s)\in \mathbb{P}^2\times \mathbb{P}(W): s_{x}\in m_{x}M^{'}_{x}  \} .$$ Let $ \varepsilon_{x}: W\longrightarrow M^{'}_{x}\Otimes k(x)$ define by $ s\longmapsto \Bar{s}_{x}$ then we see that $$ K_{x}= \textrm{Ker}\medspace \varepsilon_{x} = \{s\in W : s_{x}\in m_{x}M^{'}_{x} \} $$ 
$$ = \{ s\in W : \Bar{s}_{x}=0 \medspace \mathrm{in} \medspace M^{'}\Otimes k(x) \} .$$

Clearly map $\varepsilon_{x}$ is surjective as $W$ generates $M^{'}$. Let rk$ (M^{'}_{x})$ denotes the rank of the module  $M^{'}_{x}$ so we have   dim (Ker $\varepsilon_{x})$=  dim $W-\textrm{rk} \medspace (M^{'}_{x})$ i.e., dim (Ker $\varepsilon_{x})$=  dim $W-3$. From this, we  conclude that dim $\mathbb{P}(K_{x})\leq $ dim $\mathbb{P}(W) -3 $. Also
we observe that the fiber of $ \mathbb{P}(K)$ over $ x\in \mathbb{P}^2$ is $  \mathbb{P}(K)_{x}= \mathbb{P}(K_{x}) $. So from above discussion we see that dim $ \mathbb{P}(K)\leq \textrm{dim}\medspace \mathbb{P}(W) -3 + \textrm  {dim}\medspace(\mathbb{P}^2)$  i.e., dim $ \mathbb{P}(K)\less \textrm{dim} \medspace \mathbb{P}(W)$ as dim $\mathbb{P}^2  \less 3  $. Consider the following diagram  
\begin{center}
\begin{tikzcd}
\mathbb{P}^2\times \mathbb{P}(W) \arrow[r, "p_{2}"]  \arrow[d, "p_{1}"]
& \mathbb{P}(W)\\
\mathbb{P}^2
\end{tikzcd}
\end{center}
where  $p_{1}$ and $ p_{2}$  be the projections.
 As $\mathbb{P}^2$ is projective, $p_{2}$ is proper so $p_{2}(\mathbb{P}(K))$ is closed subset of $\mathbb{P}(W)$ and it is proper closed subset of $ \mathbb{P}(W)$  as  we know that dim $p_{2}(\mathbb{P}(K))\less $ dim $\mathbb{P}(W)$. Take $ U= \mathbb{P}(W)-p_{2}(\mathbb{P}(K))$ clearly, $U$ is non-empty open subset of $\mathbb{P}(W)$. For each $ s\in U$  we see that \lq $s$\rq \medspace is a nowehre vanishng section of $M^{'}$, then cokernel of the following map $$ \mathcal{O}_{\mathbb{P}^2} \longrightarrow M^{'}$$ $$1\longrightarrow s $$
is a vector bundle denoted by $V^{'}_{s}$ over $ \mathbb{P}^2$. Let $V_{s}=V^{'}_{s}(-1)$  then second statement of the theorem is followed by  Remark \ref{remark:remark 2.5}, Theorem \ref{foo} and Corollary \ref{corollary:corollary 2.8}.
\end{proof} 
\begin{remark}
We observe that we get a family of not-ample, nef, and big vector bundles over $\mathbb{P}^2$ parameterized by an open set of $ \mathbb{P}(H^{0}(M^{'}))$ whose projectivization corresponds to a family of smooth quadrics containing a fixed-line in $ \mathbb{P}^4$.
\end{remark}

\section{Correspondence between Projective bundles over  \texorpdfstring{$\mathbb{P}^2 $}{Text} and rational hypersurfaces of degree \texorpdfstring{$n(\geq 3)$}{Text} in \texorpdfstring{$\mathbb{P}^4$}{Text}}
 
 In general, for $n\geqslant 3$ consider the following morphism of locally free sheaves  $$ \mathcal{O}_{\mathbb{P}^2}\xrightarrow{\gamma} \mathcal{O}_{\mathbb{P}^2}(n)\Oplus\mathcal{O}_{\mathbb{P}^2}^{\bigoplus 2}(n-1)$$
 $$ 1\mapsto (f_{n},g_{n},h_{n})$$
where $f_{n}\in H^{0}(\mathbb{P}^2,\mathcal{O}_{\mathbb{P}^2}(n))$ and $ g_{n},h_{n} \in H^{0}(\mathbb{P}^2, \mathcal{O}_{\mathbb{P}^2}(n-1))$ such that $Z(f_{n},g_{n},h_{n})= \varnothing$. Then we have the following exact sequence
\begin{equation}
0\longrightarrow \mathcal{O}_{\mathbb{P}^2}\xlongrightarrow{\gamma} \mathcal{O}_{\mathbb{P}^2}(n)\Oplus \mathcal{O}_{\mathbb{P}^2}^{\bigoplus 2}(n-1) \longrightarrow N_{n} \longrightarrow 0 
\end{equation} 
where $ N_{n}$ is the cokernel. After  twisting (7) by $\mathcal{O}_{\mathbb{P}^2}(1-n)$ we get the following   
\begin{equation}
0\longrightarrow \mathcal{O}_{\mathbb{P}^2}(1-n)\longrightarrow \mathcal{O}_{\mathbb{P}^2}(1)\Oplus \mathcal{O}_{\mathbb{P}^2}^{\bigoplus 2} \longrightarrow V_{n} \longrightarrow 0
\end{equation}
where $ V_{n}= N_{n}(1-n)$ is a rank two vector bundle over $\mathbb{P}^2$. Let us suppose $ \mathbb{P}(V_{n})$ be the projectivization of $ V_n$ and  $H^{0}(\mathbb{P}^2, V_{n})= H^{0}(\mathbb{P}(V_{n}), \mathcal{O}_{\mathbb{P}(V_{n})}(1))= \langle X_{0},X_{1},X_{2},U,W\rangle$ and  as earlier  we have the non-degenerate mapping $$ \rho_{n}: \mathbb{P}(V_{n}) \longrightarrow \mathbb{P}^4 .$$ 
Let $ F_{n, \gamma }$ be the image of $ \rho_{n}$ . 

\begin{lemma}\label{lemma:lemma 4.1}
Let $V_{n}$ be as above, then $V_{n}$ is stable and $ V_{n}(-1) $ has a unique global section \lq $t_{n}$\rq . If we choose $f_{n}, g_{n}, h_{n}$ such that $Z(f_{n},g_{n},h_{n})= \varnothing$ and $ Z(g_{n},h_{n}) $ is 
 zero dimensional subscheme of length $(n-1)^2$ of $ \mathbb{P}^2$  then the zero locus of \lq $t_{n}$\rq \medspace is equal to $ Z(g_{n},h_{n})$.

\end{lemma}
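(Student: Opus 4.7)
The plan addresses the three assertions of the lemma separately.

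First, for the unique section of $V_{n}(-1)$, I would twist the defining sequence (8) by $\mathcal{O}_{\mathbb{P}^2}(-1)$ to obtain
$$0 \longrightarrow \mathcal{O}_{\mathbb{P}^2}(-n) \longrightarrow \mathcal{O}_{\mathbb{P}^2} \Oplus \mathcal{O}_{\mathbb{P}^2}^{\bigoplus 2}(-1) \longrightarrow V_{n}(-1) \longrightarrow 0,$$
and take cohomology. Since $H^{0}(\mathcal{O}_{\mathbb{P}^2}(-n)) = 0$, $H^{1}(\mathcal{O}_{\mathbb{P}^2}(-n)) = 0$, and $H^{0}(\mathcal{O}_{\mathbb{P}^2} \oplus \mathcal{O}_{\mathbb{P}^2}^{\oplus 2}(-1)) \cong \mathbb{C}$, this forces $\dim H^{0}(V_{n}(-1)) = 1$. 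The unique (up to scalar) section $t_{n}$ is the image of $(1,0,0)$ under the surjection in the display above.

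Second, for stability of $V_{n}$: from (8) one reads $c_{1}(V_{n}) = n[h]$, so stability is equivalent to $H^{0}(V_{n}(-d)) = 0$ for every $d \geq \lceil n/2 \rceil$. Twisting (8) by $\mathcal{O}_{\mathbb{P}^2}(-d)$ and using that $H^{1}$ of any line bundle vanishes on $\mathbb{P}^{2}$, one obtains
$$\dim H^{0}(V_{n}(-d)) = \dim H^{0}(\mathcal{O}_{\mathbb{P}^2}(1-d)) + 2\dim H^{0}(\mathcal{O}_{\mathbb{P}^2}(-d)) - \dim H^{0}(\mathcal{O}_{\mathbb{P}^2}(1-n-d)).$$
For every $d \geq 2$ each term on the right vanishes, and $n \geq 3$ forces $\lceil n/2 \rceil \geq 2$, so stability follows.

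Third, for the zero locus: the hypothesis $Z(f_{n}, g_{n}, h_{n}) = \varnothing$ forces $f_{n}(p) \neq 0$ at every point $p \in Z(g_{n}, h_{n})$, so in a neighborhood of such $p$ the element $f_{n}$ is a unit. The local relation $f_{n} e_{0} + g_{n} e_{1} + h_{n} e_{2} = 0$ imposed in the cokernel then lets one solve $e_{0} = -(g_{n}/f_{n}) e_{1} - (h_{n}/f_{n}) e_{2}$, exhibiting $V_{n}(-1)$ as locally free of rank two on $e_{1}, e_{2}$; under this trivialization $t_{n} = [e_{0}]$ has coordinates $(-g_{n}/f_{n}, -h_{n}/f_{n})$, whose zero scheme is cut out by the ideal $(g_{n}, h_{n})$. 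Hence $(t_{n})_{0} = Z(g_{n}, h_{n})$ scheme-theoretically.

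The main obstacle is the scheme-theoretic (not merely set-theoretic) identification in the last step, which needs the explicit local elimination above rather than just the set-level observation. Two consistency checks support it: Whitney's formula gives $c_{2}(V_{n}(-1)) = (n-1)^{2}[h^{2}]$, matching the assumed length $(n-1)^{2}$ of $Z(g_{n}, h_{n})$; and the vanishing $H^{0}(V_{n}(-1-e)) = 0$ for $e \geq 1$ (a byproduct of the stability computation) rules out any divisorial component of $(t_{n})_{0}$ a priori.
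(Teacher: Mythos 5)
Your proposal is correct and follows essentially the same route as the paper: the unique section and the stability are both obtained by twisting the defining sequence (8) and computing cohomology (your direct Hoppe-type criterion in terms of $H^{0}(V_{n}(-d))$ for $d\geq \lceil n/2\rceil$ is exactly the content of the normalized-bundle criterion from Okonek--Schneider--Spindler that the paper invokes), and the zero locus is read off from the explicit form of $\gamma$. Your local elimination of $e_{0}$ where $f_{n}$ is a unit makes scheme-theoretically precise the identification $(t_{n})_{0}=Z(g_{n},h_{n})$, which the paper asserts in a single line.
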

\begin{proof}
Using (8) we have  $c_{1}(V_{n})= n[h]$ and 
\begin{equation}
(V_{n})_{\textrm{norm}}=
\begin{cases}
V_{n}(-\frac{n}{2}), & \text{if $n$ is even }\ \\
V_{n}(-\frac{n+1}{2})  , & \text{ if $n$ is odd}
\end{cases}.
\end{equation}

If $n$ is even then after twisting (8) by $\mathcal{O}_{\mathbb{P}^2}(\frac{-n}{2})$ and taking the cohomology we have $H^{0}(\mathbb{P}^2, (V_{n})_{\textrm{norm}})= 0$ and similarly if $n$ is odd we have $H^{0}(\mathbb{P}^2, (V_{n})_{\textrm{norm}})= 0$ so by [\ref{11} Chapter 2, Lemma 1.2.5], we conclude that $V_{n}$ is a stable bundle. After twisting  exact sequence (8) by $\mathcal{O}_{\mathbb{P}^2}(-1)$ we have  exact sequence
\begin{equation}
0\longrightarrow \mathcal{O}_{\mathbb{P}^2}(-n)\longrightarrow \mathcal{O}_{\mathbb{P}^2}\Oplus \mathcal{O}_{\mathbb{P}^2}^{\bigoplus 2}(-1) \longrightarrow V_{n}(-1) \longrightarrow 0 .
 \end{equation}

\noindent Now taking the cohomology we  get dim $H^{0}(\mathbb{P}^2, V_{n}(-1))= 1$. Let \lq $ t_{n}$\rq \medspace be the unique section  in  $V_n(-1)$. Since the map $ \gamma$ is defined by $ 1 \rightarrow ( f_{n},g_{n},h_{n}) $ and also we have $Z(f_{n},g_{n},h_{n})= \varnothing$. So, by exact sequence (10), it follows that $ (t_{n})_{0}= Z(g_{n},h_{n})$.

\end{proof}

If we choose $ g_{n},h_{n} \in H^{0}(\mathbb{P}^2, \mathcal{O}_{\mathbb{P}^2}(n-1)) $  such  that $g_{n}$, $ h_{n}$ intersect on $ (n-1)^2$ distinct points. 
Then we have the following lemma. 
 
\begin{lemma}\label{lemma:lemma 4.2}
Let \lq $t_{n}$\rq \medspace be the  unique section of $V_{n}(-1)$ and \lq$\tilde{t_{n}} $\rq \medspace be the corresponding section of $\mathcal{O}_{\mathbb{P}(V_{n}(-1))}(1)$ then $(\tilde{t_{n}})_{0} = \widetilde{\mathbb{P}^2}_{P_{1}P_{2}...P_{r}}  $ where $\widetilde{\mathbb{P}^2}_{P_{1}P_{2}...P_{r}}$ denotes the blow-up of $ \mathbb{P}^2$ at distinct points  $P_{1},P_{2},...,P_{r}$  with $ r=(n-1)^2$ and $\rho_{n} $ maps $ (\tilde{t_{n}})_{0} \subseteq \mathbb{P}(V_{n}(-1)) \cong \mathbb{P}(V_{n})$ to a line $\ell$ in $\mathbb{P}^{4}$. 
\end{lemma}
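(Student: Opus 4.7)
The approach closely mirrors the proof of Lemma~\ref{lemma:lemma 2.4}. By Lemma~\ref{lemma:lemma 4.1}, the hypothesis that $g_n$ and $h_n$ meet transversally forces $(t_n)_0 = \{P_1,\ldots,P_r\}$ with $r=(n-1)^2$ distinct reduced points. Applying the argument of [\ref{6}, Lemma 2.5] to the section $\tilde{t_n}$ of $\mathcal{O}_{\mathbb{P}(V_n(-1))}(1)$ then identifies $(\tilde{t_n})_0$ with the blow-up $\widetilde{\mathbb{P}^2}_{P_1\ldots P_r}\xrightarrow{\theta}\mathbb{P}^2$, the restriction of the bundle projection $\mathbb{P}(V_n(-1))\to\mathbb{P}^2$ playing the role of $\theta$. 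This gives the first claim of the lemma.

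The key step for the second claim is to produce an analogue of the sequence in Remark~\ref{remark:remark 2.2}. I would observe that the map $(0,h_n,-g_n)\colon \mathcal{O}_{\mathbb{P}^2}(1)\oplus \mathcal{O}_{\mathbb{P}^2}^{\bigoplus 2}\to \mathcal{O}_{\mathbb{P}^2}(n-1)$ kills the image of $\gamma$ (since $h_n g_n - g_n h_n = 0$), and therefore descends to a sheaf map $V_n\to \mathcal{O}_{\mathbb{P}^2}(n-1)$ with image exactly the ideal $(g_n,h_n)\subseteq \mathcal{O}_{\mathbb{P}^2}(n-1)$, i.e.\ $\mathcal{I}_{\{P_1,\ldots,P_r\}}(n-1)$. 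Because $(g_n,h_n)$ is a regular sequence, the Koszul complex computes its syzygies and a short diagram chase identifies the kernel with $\mathcal{O}_{\mathbb{P}^2}(1)$, producing the exact sequence
\[
0\longrightarrow \mathcal{O}_{\mathbb{P}^2}(1)\longrightarrow V_n \longrightarrow \mathcal{I}_{\{P_1,\ldots,P_r\}}(n-1)\longrightarrow 0.
\]

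Following the pattern of Lemma~\ref{lemma:lemma 2.4}, I then pull this back by $\theta$ and compose with the canonical surjection $\theta^{\ast}\mathcal{I}_{\{P_i\}}\twoheadrightarrow \mathcal{O}_{\widetilde{\mathbb{P}^2}_{P_1\ldots P_r}}(-D)$, where $D=D_1+\cdots+D_r$ is the total exceptional divisor. The resulting surjection $\theta^{\ast}V_n\twoheadrightarrow \theta^{\ast}\mathcal{O}_{\mathbb{P}^2}(n-1)\otimes \mathcal{O}(-D)$, together with [\ref{9}, Chapter II.7, Proposition 7.12], yields a morphism $\widetilde{\mathbb{P}^2}_{P_1\ldots P_r}\to \mathbb{P}(V_n)$ over $\mathbb{P}^2$ which by uniqueness must be the inclusion of $(\tilde{t_n})_0$; in particular $\mathcal{O}_{\mathbb{P}(V_n)}(1)\mid_{(\tilde{t_n})_0}\cong \theta^{\ast}\mathcal{O}_{\mathbb{P}^2}(n-1)\otimes \mathcal{O}_{\widetilde{\mathbb{P}^2}_{P_1\ldots P_r}}(-D)$. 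Taking global sections of the displayed exact sequence, and using $H^1(\mathcal{O}_{\mathbb{P}^2}(1))=0$, gives $\ker\bigl(H^0(V_n)\to H^0(\mathcal{I}_{\{P_i\}}(n-1))\bigr)=H^0(\mathcal{O}_{\mathbb{P}^2}(1))=\langle X_0,X_1,X_2\rangle$, while the image is $2$-dimensional, spanned by $g_n,h_n$. Thus $X_0,X_1,X_2$ vanish identically on $(\tilde{t_n})_0$ and $U,W$ restrict to two linearly independent sections, so $\rho_n$ sends $(\tilde{t_n})_0$ into the line $\ell=\{X_0=X_1=X_2=0\}\subseteq \mathbb{P}^4$, and non-constancy of the induced map $[g_n:h_n]$ forces surjectivity onto $\ell$.

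The main technical obstacle is the construction and verification of the sequence $0\to\mathcal{O}_{\mathbb{P}^2}(1)\to V_n\to \mathcal{I}_{\{P_1,\ldots,P_r\}}(n-1)\to 0$; both the identification of the image as $\mathcal{I}_{\{P_i\}}(n-1)$ and of the kernel as $\mathcal{O}_{\mathbb{P}^2}(1)$ depend crucially on $(g_n,h_n)$ being a regular sequence, which is exactly the transversality hypothesis. Once this sequence is in hand, the remainder of the argument is formal and strictly parallels Lemma~\ref{lemma:lemma 2.4}.
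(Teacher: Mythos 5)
Your proof is correct and follows the same overall skeleton as the paper's: identify $(\tilde{t_n})_0$ with the blow-up via [\ref{6}, Lemma 2.5], establish the exact sequence $0\to\mathcal{O}_{\mathbb{P}^2}(1)\to V_n\to\mathcal{I}_{\{P_1,\ldots,P_r\}}(n-1)\to 0$, pull back to get a surjection $\theta_n^{\ast}V_n\twoheadrightarrow\theta_n^{\ast}\mathcal{O}_{\mathbb{P}^2}(n-1)\otimes\mathcal{O}(-D)$, and invoke [\ref{9}, Chapter II.7, Proposition 7.12] to conclude $\mathcal{O}_{\mathbb{P}(V_n)}(1)\mid_{(\tilde{t_n})_0}$ is the pullback of $\mathcal{O}_{\mathbb{P}^1}(1)$. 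Where you genuinely diverge is in the derivation of the middle exact sequence: the paper gets it in one line from the Serre construction, applying [\ref{2}, Lemma 3] to the unique section $t_n$ of $V_n(-1)$ vanishing in codimension two and twisting by $\mathcal{O}_{\mathbb{P}^2}(1)$, whereas you build the surjection $V_n\to\mathcal{I}_{\{P_i\}}(n-1)$ explicitly from the presentation (8) via the Koszul relation of the regular sequence $(g_n,h_n)$ and then identify the kernel by a diagram chase. Your route is longer but more self-contained, and it has the side benefit of making the restriction map on global sections completely explicit: you see directly that $X_0,X_1,X_2$ die on $(\tilde{t_n})_0$ while $U,W$ restrict to the pencil $\langle g_n,h_n\rangle$, which pins down $\ell=\{X_0=X_1=X_2=0\}$ in coordinates (a fact the paper only recovers later, in Theorem \ref{theorem:theorem 4.4} and the remark following it, via the equation $f_n+Ug_n+Wh_n$). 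The paper instead phrases the endgame through the resolved rational map $\beta_n$ attached to the pencil $\langle g_n,h_n\rangle$; the two formulations are equivalent since $\theta_n^{\ast}\mathcal{O}_{\mathbb{P}^2}(n-1)\otimes\mathcal{O}(-D)\cong\beta_n^{\ast}\mathcal{O}_{\mathbb{P}^1}(1)$.
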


\begin{proof}
By Lemma \ref{lemma:lemma 4.1} we have  $(t_{n})_{0}= \{P_{1},P_{2},...,P_{r} \}$ a closed subscheme of $ \mathbb{P}^{2}$, where $ P_{1},P_{2},...,P_{r}$ are distinct points with $r= (n-1)^2 $. By [\ref{6}, Lemma 2.5] $(\Tilde{t_{n}})_{0}$ is the blow-up of $\mathbb{P}^2 $ along the points $\{P_{1},P_{2},...,P_{r} \}$  i.e., $$(\tilde{t_{n}})_{0} = \widetilde{\mathbb{P}^2}_{P_{1}P_{2}...P_{r}} \xrightarrow[]{\theta_{n}} \mathbb{P}^2.$$ We have the linear system $W \subseteq H^{0}(\mathbb{P}^2, \mathcal{O}_{\mathbb{P}^2}(n-1))$ such that $ W = \langle g_{n} ,h_{n} \rangle$ where $g_{n},h_{n}\in H^{0}(\mathbb{P}^2, \mathcal{O}_{\mathbb{P}^2}(n-1)) $ and $ Z(g_{n},h_{n})= \{P_{1},P_{2},...,P_{r} \} $ is the base locus of $W$. The linear system $ W $ defines a rational map $$\phi_{n}:\mathbb{P}^2 \xdashrightarrow{\hspace{5mm}} \mathbb{P}^{1} .$$ We have the following diagram 

\begin{center}
\begin{tikzcd}

\widetilde{\mathbb{P}^2}_{{P_{1}P_{2}...P_{r}}}
\arrow[rd, "\beta_{n}"] \arrow[d,"\theta_{n}"]
 \\
\mathbb{P}^2 \arrow[r, dashed, "\phi_{n}" ]
& |[, rotate=0]|  \mathbb{P}^1
\end{tikzcd}
\end{center}

\noindent
and the map $\beta_{n}$ is given by the linear system $$| {\theta_{n}}^{\ast}(\mathcal{O}_{\mathbb{P}^2 }(n-1)) \Otimes \mathcal{O}_{\widetilde{\mathbb{P}^{2}}_{P_{1}P_{2}...P_{r} }}(-D) |$$ where $D$ is the exceptional divisor. So we have $${\theta_{n}}^{\ast}(\mathcal{O}_{\mathbb{P}^2}(n-1))\Otimes \mathcal{O}_{\widetilde{\mathbb{P}^{2}}_{P_{1}P_{2}...P_{r} }}(-D) \cong {\beta_{n}}^{\ast}\mathcal{O}_{\mathbb{P}^{1}}(1).$$

To show the claim it is enough to show $\mathcal{O}_{\mathbb{P}(V_{n})}(1) \mid_{(\Tilde{t_{n}})_{0}}\cong \beta_{n}^{\ast}(\mathcal{O}_{\mathbb{P}^{1}}(1))$. Since the unique section \lq $t_{n} $\rq \medspace  of $V_{n}(-1)$ vanishes only in codimension two then by [\ref{2}, Lemma 3] we have the following exact sequence $$ 0\longrightarrow \mathcal{O}_{\mathbb{P}^2} \longrightarrow V_{n}(-1) \longrightarrow \mathcal{I}_{\{P_{1}, P_{2},..., P_{r} \}}(n-2) \longrightarrow 0 .$$ Then tensoring by $\mathcal{O}_{\mathbb{P}^2}(1)$ we get 
\begin{equation}
0\longrightarrow \mathcal{O}_{\mathbb{P}^2} \longrightarrow V_{n} \longrightarrow \mathcal{I}_{\{P_{1}, P_{2},..., P_{r} \}}(n-1) \longrightarrow 0 .
\end{equation}
By the property of blow-up there is a surjective map $$ {\theta_{n}}^{\ast}(\mathcal{I}_{\{P_{1},P_{2},...,P_{r} \}}) \rightarrow \mathcal{O}_{\widetilde{\mathbb{P}^{2}}_{{P_{1}P_{2}...P_{r} }}}(-D).$$ It will induce the surjective map between $${\theta_{n}}^{\ast}(\mathcal{I}_{\{P_{1},P_{2},...,P_{r} \}}(n-1)) \rightarrow {\theta_{n}}^{\ast}(\mathcal{O}_{\mathbb{P}^2}(n-1)) \Otimes \mathcal{O}_{\widetilde{\mathbb{P}^{2}}_{P_{1}P_{2}...P_{r} }}(-D).$$ By the exact sequence (11), we have the surjective map $$ {\theta_{n}}^{\ast}(V_{n})\rightarrow {\theta_{n}}^{\ast}(\mathcal{O}_{\mathbb{P}^2}(n-1)) \Otimes \mathcal{O}_{\widetilde{\mathbb{P}^{2}}_{P_{1}P_{2}... P_{r} }}(-D) .$$ 
By [\ref{9} Chapter II.7, Proposition 7.12 ], we have the following diagram
\begin{center}
\begin{tikzcd}
\widetilde{\mathbb{P}^2}_{P_{1}P_{2}...P_{r}} \arrow[r, hook ]  \arrow[d, "\theta_{n}"]
& \mathbb{P}(V_{n}) \arrow[ld, "\tau_{n}"] \\
\mathbb{P}^{2}
\end{tikzcd}
\end{center}
and
 $$\mathcal{O}_{\mathbb{P}(V_{n})}(1) \mid_{(\Tilde{t_{n}})_{0}}\cong {\beta_{n}}^{\ast}(\mathcal{O}_{\mathbb{P}^{1}}(1)).$$
Using this, we conclude the lemma.

\end{proof}

\begin{remark}\label{remark:remark 4.3}
    The above line $ \ell$ is also contained in $F_{n,\gamma } $.
\end{remark}
\begin{theorem}\label{theorem:theorem 4.4}The image of $\rho_{n}$ i.e., $F_{n, \gamma}$ is a singular hypersurface of degree $n$.  
\end{theorem}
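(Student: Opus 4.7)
The strategy would be to follow the template of Theorem~\ref{theorem:theorem 3.1}: exhibit an explicit degree-$n$ polynomial vanishing on $F_{n,\gamma}$, compute the top self-intersection $\xi^{3}$ where $\xi=\mathcal{O}_{\mathbb{P}(V_{n})}(1)$, combine these to pin down $F_{n,\gamma}$ as an irreducible hypersurface of degree $n$, and finally check singularity along $\ell$ via the Jacobian criterion.

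First, I would compute the Chern classes of $V_{n}$ from the exact sequence~(8): $c(V_{n}) = (1+[h])/(1+(1-n)[h])$ yields $c_{1}(V_{n}) = n[h]$ and $c_{2}(V_{n}) = n(n-1)[h^{2}]$. Using $\xi^{2} = \pi^{\ast}c_{1}(V_{n})\,\xi - \pi^{\ast}c_{2}(V_{n})$ and the vanishing of $c_{1}c_{2}$ for dimension reasons on $\mathbb{P}^{2}$, one obtains
\begin{equation*}
\xi^{3} = c_{1}(V_{n})^{2} - c_{2}(V_{n}) = n^{2} - n(n-1) = n,
\end{equation*}
so $F_{n,\gamma}$ is $3$-dimensional (hence a hypersurface in $\mathbb{P}^{4}$) and $\deg(\rho_{n})\cdot\deg(F_{n,\gamma}) = n$.

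Next, exactly as in Theorem~\ref{theorem:theorem 3.1}, the defining map $\gamma\colon 1\mapsto(f_{n},g_{n},h_{n})$ places the polynomial $f_{n} + U g_{n} + W h_{n}$ (of total degree $n$ in $X_{0},X_{1},X_{2},U,W$) in the kernel of the multiplication map $\mathrm{Sym}^{n}H^{0}(\mathcal{O}_{\mathbb{P}(V_{n})}(1))\to H^{0}(\mathcal{O}_{\mathbb{P}(V_{n})}(n))$, so $F_{n,\gamma}\subseteq Z(f_{n} + U g_{n} + W h_{n})$. I would then check that $Z(f_{n} + U g_{n} + W h_{n})$ is irreducible: since $Z(f_{n},g_{n},h_{n}) = \varnothing$, the polynomials $g_{n},h_{n}$ share no common factor in $\mathbb{C}[X_{0},X_{1},X_{2}]$, and the linearity of $f_{n} + U g_{n} + W h_{n}$ in each of $U$ and $W$ rules out any nontrivial factorization. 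Since $F_{n,\gamma}$ is the image of the irreducible variety $\mathbb{P}(V_{n})$, it is itself an irreducible $3$-dimensional subvariety contained in the irreducible hypersurface $Z(f_{n} + U g_{n} + W h_{n})$, so they coincide; this gives $\deg(F_{n,\gamma}) = n$ and forces $\rho_{n}$ to be birational.

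Finally, for singularity along $\ell = \{X_{0}=X_{1}=X_{2}=0\}$ (the fixed line from Lemma~\ref{lemma:lemma 4.2} and Remark~\ref{remark:remark 4.3}), the Jacobian criterion suffices: writing $F = f_{n} + U g_{n} + W h_{n}$, one has $\partial_{U}F = g_{n}$ and $\partial_{W}F = h_{n}$ homogeneous of degree $n-1\geqslant 2$ in the $X_{j}$'s, while $\partial_{X_{i}}F = \partial_{X_{i}}f_{n} + U\,\partial_{X_{i}}g_{n} + W\,\partial_{X_{i}}h_{n}$ has every term of positive degree in the $X_{j}$'s (degrees $n-1$ and $n-2$, both $\geqslant 1$ when $n\geqslant 3$). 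All these partials therefore vanish along $\ell$, so $F_{n,\gamma}$ is singular along $\ell$. The main obstacle is making the first assertion rigorous, namely that $f_{n} + U g_{n} + W h_{n}$ is indeed a relation in $\mathrm{Sym}^{n}V_{n}$; this generalizes Theorem~\ref{theorem:theorem 3.1} but requires careful bookkeeping of the differing twists ($f_{n}$ has degree $n$ while $g_{n},h_{n}$ have degree $n-1$) in the symmetric-algebra computation.
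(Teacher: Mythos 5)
Your proposal is correct and follows the same overall strategy as the paper: exhibit $f_{n}+Ug_{n}+Wh_{n}$ as an element of $\ker\bigl(\mathrm{Sym}^{n}H^{0}(\mathcal{O}_{\mathbb{P}(V_{n})}(1))\to H^{0}(\mathcal{O}_{\mathbb{P}(V_{n})}(n))\bigr)$ so that $\mathrm{Img}(\rho_{n})\subseteq Z(f_{n}+Ug_{n}+Wh_{n})$, show the image has codimension one, match it with the irreducible hypersurface, and apply the Jacobian criterion along $\ell=\{X_{0}=X_{1}=X_{2}=0\}$. Your computation $\xi^{3}=c_{1}(V_{n})^{2}-c_{2}(V_{n})=n^{2}-n(n-1)=n$ makes explicit what the paper only cites as ``the similar argument as in Theorem \ref{theorem:theorem 2.3}.'' The one genuine divergence is the irreducibility step, and there your argument is actually the sounder one: the paper infers irreducibility of $Z(f_{n}+Ug_{n}+Wh_{n})$ from its being singular along $\ell$, which is a non sequitur (a union of hyperplanes through $\ell$ is also singular along $\ell$), whereas your bidegree argument — a factorization would either force a common factor of $f_{n},g_{n},h_{n}$ (excluded by $Z(f_{n},g_{n},h_{n})=\varnothing$) or produce a $UW$ term absent from $F$ — genuinely works. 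One small correction to that step: $Z(f_{n},g_{n},h_{n})=\varnothing$ does \emph{not} imply that $g_{n}$ and $h_{n}$ share no common factor (that stronger hypothesis appears only in Lemma \ref{lemma:lemma 4.2}); it only implies the three forms have no common factor, but that weaker statement is all your case analysis requires. Finally, the ``obstacle'' you flag at the end — verifying that $f_{n}+Ug_{n}+Wh_{n}$ really is a relation despite the mixed twists — is handled in the paper exactly as in Theorem \ref{theorem:theorem 3.1}, by identifying $H^{0}(\mathbb{P}(V_{n}),\mathcal{O}_{\mathbb{P}(V_{n})}(n))$ with $H^{0}(\mathbb{P}^{2},\mathrm{Sym}^{n}V_{n})$ and reading the relation off the presentation (7), so it is bookkeeping rather than a gap.
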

\begin{proof}
Let $H^{0}(\mathbb{P}^2, V_{n})= H^{0}(\mathbb{P}(V_{n}), \mathcal{O}_{\mathbb{P}(V_{n})}(1))= \langle X_{0}, X_{1}, X_{2},U,W \rangle$.
Since we have the surjective map  $$ \textrm{Sym}^{n}H^{0}(\mathbb{P}(V_{n}),\mathcal{O}_{\mathbb{P}(V_{n})}(1))\xlongrightarrow{\Psi_{n} } H^{0}(\mathbb{P}(V_{n}),\mathcal{O}_{\mathbb{P}(V_{n})}(n)).$$ 
We know that  $$H^{0}(\mathbb{P}^2, \textrm{Sym}^{n}V_{n})= H^{0}(\mathbb{P}(V_{n}),\mathcal{O}_{\mathbb{P}(V_{n})}(n)) = H^{0}(\mathbb{P}(V_{n}),\rho_{n}^{\ast}(\mathcal{O}_{\mathbb{P}^4}(n)))$$ and in the exact sequence  (7) $\gamma$ is defined by $1\rightarrow (f_{n},g_{n},h_{n})$
so we see that $ f_{n} + Ug_{n}+ Wh_{n} \in$ Ker$(\Psi_{n})$ i.e., $ f_{n} + Ug_{n}+ Wh_{n}$ vanishes along the  Img $(\rho_{n})$ so we have Img $(\rho_{n}) \subseteq Z(f_{n}+Ug_{n}+Wh_{n})$. Using the similar argument as we used in Theorem \ref{theorem:theorem 2.3}, we conclude Img $(\rho_{n})$ is codimension one subvariety of $\mathbb{P}^4$. By Jacobian criterion, we see that $ Z(f_{n}+Ug_{n}+Wh_{n})$ is singular along the line  $ X_{0}=0, X_{1}=0, X_{2}=0$ hence $Z(f_{n}+Ug_{n}+Wh_{n})$ is irreducible. So by the dimension and irreducibility of Img $(\rho_{n}) $ and $Z(f_{n} + Ug_{n}+ Wh_{n})$   we  conclude that Img$(\rho_{n})= F_{n,\gamma}= Z(f_{n} + Ug_{n}+ Wh_{n})$. So, $ F_{n,\gamma}$ is a singular hypersurface of degree \lq$n$\rq \medspace  in $ \mathbb{P}^4$. 
\end{proof}
\begin{proposition}\label{proposition:proposition 4.5}
The Blow up of $F_{n, \gamma}$ along the line $\ell$  is isomorphic to $ \mathbb{P}(V_{n}) $.
\end{proposition}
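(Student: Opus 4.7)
My plan is to imitate the proof of Theorem \ref{foo}, replacing $E$ by $V_n$, the smooth quadric $S$ by the singular hypersurface $F_{n,\gamma}$, and the map $\psi$ by $\rho_n$. First, the surjection $M = \mathcal{O}_{\mathbb{P}^2}(1)\Oplus \mathcal{O}_{\mathbb{P}^2}^{\bigoplus 2} \twoheadrightarrow V_n$ coming from exact sequence (8) induces a closed embedding $\mathbb{P}(V_n)\hookrightarrow \mathbb{P}(M)$ with $\mathcal{O}_{\mathbb{P}(M)}(1)\mid_{\mathbb{P}(V_n)}\cong \mathcal{O}_{\mathbb{P}(V_n)}(1)$. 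By [\ref{5}, Chapter 9, Proposition 9.11], the complete linear system $|\mathcal{O}_{\mathbb{P}(M)}(1)|$ identifies $\mathbb{P}(M)$ with $\widetilde{\mathbb{P}^4_{\ell}}$, and its associated morphism $\eta:\mathbb{P}(M)\longrightarrow\mathbb{P}^4$ is the blow-down along the line $\ell$. Consequently $\rho_n$ factors as $\mathbb{P}(V_n)\hookrightarrow\mathbb{P}(M)\xrightarrow{\eta}\mathbb{P}^4$, fitting into a commutative square with $F_{n,\gamma}\hookrightarrow\mathbb{P}^4$ along the bottom; Remark \ref{remark:remark 4.3} ensures $\ell\subseteq F_{n,\gamma}$.

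Next, I would verify that $\rho_n$ is birational by computing the top self-intersection of $\xi_n=\mathcal{O}_{\mathbb{P}(V_n)}(1)$. From the exact sequence (8), a routine Chern class computation gives $c_1(V_n)=n[h]$ and $c_2(V_n)=n(n-1)[h^2]$, so the projective bundle relation $\xi_n^2=\tau_n^{\ast}(c_1(V_n))\xi_n-\tau_n^{\ast}(c_2(V_n))$ yields
$$\xi_n^3 = c_1(V_n)^2 - c_2(V_n) = n^2 - n(n-1) = n.$$
Since $F_{n,\gamma}$ is a degree $n$ hypersurface in $\mathbb{P}^4$ by Theorem \ref{theorem:theorem 4.4}, and $\xi_n^3 = \deg(\rho_n)\cdot\deg(F_{n,\gamma})$, this forces $\deg(\rho_n)=1$, so $\rho_n$ is a birational morphism.

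Finally, $\mathbb{P}(V_n)$ is an irreducible smooth three-dimensional closed subvariety of $\widetilde{\mathbb{P}^4_{\ell}}$ which is not contained in the exceptional divisor of $\eta$ and maps birationally onto $F_{n,\gamma}$, so it must coincide with the strict transform of $F_{n,\gamma}$ in $\widetilde{\mathbb{P}^4_{\ell}}$. Then $\rho_n$ is a projective birational morphism from a smooth variety onto $F_{n,\gamma}$, and [\ref{9}, Chapter II.7, Theorem 7.17], invoked exactly as in the proof of Theorem \ref{foo}, shows that $\rho_n$ is a blow-up morphism. Combining the commutative diagram with $\ell\subseteq F_{n,\gamma}$ and the fact from Lemma \ref{lemma:lemma 4.2} that the exceptional divisor $(\tilde{t_n})_0$ is contracted onto $\ell$ while $\rho_n$ is an isomorphism over $F_{n,\gamma}\setminus\ell$, I conclude $\mathbb{P}(V_n)\cong \widetilde{(F_{n,\gamma})_{\ell}}$. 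The main obstacle I anticipate is pinning down the centre of this blow-up as the reduced line $\ell$ rather than some nonreduced thickening supported on $\ell$: because $F_{n,\gamma}$ is singular along $\ell$ by Theorem \ref{theorem:theorem 4.4}, verifying that restricting $\eta$ to the strict transform of $F_{n,\gamma}$ reproduces precisely the blow-up along the reduced line demands a local comparison in coordinates analogous to the one carried out in Lemma \ref{lemma:lemma 2.7}.
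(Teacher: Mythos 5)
Your proposal is correct and is essentially the paper's own argument: the paper proves Proposition \ref{proposition:proposition 4.5} simply by declaring it ``similar to the proof of Theorem \ref{foo},'' and your write-up is precisely that adaptation, with the right Chern class bookkeeping ($c_1(V_n)=n[h]$, $c_2(V_n)=n(n-1)[h^2]$, hence $\xi_n^3=n=\deg(\rho_n)\cdot\deg F_{n,\gamma}$) supplying the birationality of $\rho_n$. The reducedness worry you flag at the end is handled by the standard fact that the strict transform of $F_{n,\gamma}$ under $\eta$ is the blow-up of $F_{n,\gamma}$ along the inverse image ideal sheaf $\mathcal{I}_{\ell}\cdot\mathcal{O}_{F_{n,\gamma}}$, which equals $\mathcal{I}_{\ell/F_{n,\gamma}}$ since $\ell\subseteq F_{n,\gamma}$ is reduced.
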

\begin{proof} 
Similar to the proof of Theorem \ref{foo}.
\end{proof}
\begin{remark}
From Theorem \ref{theorem:theorem 4.4}  hypersurface $F_{n,\gamma} $ is singular along a line $ X_{0}=0, X_{1}=0, X_{2}=0$.  It is the same line $\ell$ which is the image of the  $(\tilde {t_{n}})_{0}$ under the map $ \rho_{n}$. 
 
\end{remark}

\begin{example}
In above Theorem \ref{theorem:theorem 4.4} if  we choose   $f_{n}=X_{0}^{n}$ and   $g_{n}=X_{1}^{n-1}$, $ h_{n}= X_{2}^{n-1}$ then image of $ \rho_{n}$ is $X_{0}^{n}+ UX_{1}^{n-1}+ WX_{2}^{n-1}$ which is not smooth along  the line $ X_{0}=0, X_{1}=0, X_{2}=0$ in $\mathbb{P}^4$.
\end{example}

\begin{proof}[\textbf{Proof of the Theorem \ref{raz}} ]
 The existence of the open set $U_{n}$ is similar to that of $ U$ in Theorem \ref{baz}.
 For each $ s\in U_{n}$  we see that \lq $s$\rq \medspace is a nowhere vanishing section of $ \mathcal{O}_{\mathbb{P}^2}(n)\Oplus \mathcal{O}_{\mathbb{P}^2}^{\bigoplus 2}(n-1)$, then cokernel of the following map $$ \mathcal{O}_{\mathbb{P}^2} \longrightarrow \mathcal{O}_{\mathbb{P}^2}(n)\Oplus \mathcal{O}_{\mathbb{P}^2}^{\bigoplus 2}(n-1)$$  \hspace{150pt} $1\longmapsto s $\\
 \\
is a vector bundle denoted by $V^{'}_{n,s}$ over $ \mathbb{P}^2$. Let $V_{n,s}=V^{'}_{n,s}(1-n)$  then second statement of the theorem is followed by  Remark \ref{remark:remark 4.3} and Theorem \ref{theorem:theorem 4.4}.
\end{proof}
\begin{remark}
We observe that we get a family of not-ample, nef, and big vector bundles over $ \mathbb{P}^2$ parameterized by an open set of  $ \mathbb{P}(H^{0}(\mathcal{O}_{\mathbb{P}^2}(n)\Oplus \mathcal{O}_{\mathbb{P}^2}^{\bigoplus 2}(n-1)))$ whose projectivization corresponds to a family of rational hypersurfaces of degree \lq $n$\rq \medspace containing a fixed line in $ \mathbb{P}^4$.
   
\end{remark}
\section{Acknowledgement}
I express my gratitude to my advisor, Professor D.S. Nagaraj, for suggesting this problem and for invaluable guidance and constructive suggestions throughout the course of this project. I also thank Dr. Souradeep Majumder, Supravat Sarkar, and Ashima Bansal for their helpful suggestions and insightful discussions.

\section{References}

\begin{enumerate}[label={[\arabic*]}]
    \item \label{1} E. Arbarello, M. Cornalba, P. A. Griffiths, J. Harris, {\it  Geometry of algebraic curves. Vol. I}, Grundlehren der 
mathematischen Wissenschaften 267 [Fundamental Principles of Mathematical Sciences]
Springer-Verlag, New York, 1985. 
    \item \label{2} W. Barth, {\it Some Properties of Stable rank-2 Vector Bundles on $\mathbb{P}^n$}, Math.
Ann. 226, 125–150 (1977).
\item \label{3} A.Bansal, S. Sarkar, S. Vats, { \it Isomorphism of Multiprojective Bundles and Projective Towers}, https://doi.org/10.48550/arXiv.2311.00999.

\item \label{4} K. Dan, D. S. Nagaraj, {\it Null correlation bundle on $\mathbb{P}^3$}, J. Ramanujan Math. Soc. 28A (2013), 75–80.

\item \label{5} D. Eisenbud, J. Harris, {\it 3264 and All That—A Second Course in Algebraic Geometry (2016)}
(Cambridge: Cambridge University Press).

  \item \label{6} A. El Mazouni,   D. S. Nagaraj, {\it Hyperplane sections of the projective bundle associated to the tangent bundle of $\mathbb{P}^2$} Geom. Dedicata 217 (2023). 
\item \label{7} W. Fulton, { \it (1984) Intersection Theory}, Springer-Verlag.
\item \label{8}  S. Galkin, D.S. Nagaraj {\it Projective bundles and blow-ups of projective spaces}, Ann. Mat. Pura Appl. (4)201(2022), no.6, 2707–2713.

\item\label{9} R. Hartshorne, {\it Algebraic Geometry (1977)} (NewYork: Springer-Verlag).
 
\item  \label{10} S.L. Kleiman, {\it Ample vector bundles on algebraic surfaces}
Proc. Amer. Math. Soc. 21 (1969), 673–676

\item \label{11} C.Okonek, M.Schneider, H.Spindler,\textit {Vector bundles on complex projective spaces}, Progress in Mathematics, 3. Birkhäuser, Boston, MA, 1980.

\item \label{12} N. Ray, {\it Examples of blown up varieties having projective bundle structures}, Proc. Indian Acad. Sci. Math. Sci. 130 (2020).

  \end{enumerate}

\vspace{3pt}
\begin{flushleft}
{\scshape Indian Institute of Science Education and Research Tirupati, Rami Reddy Nagar, Karakambadi Road, Mangalam (P.O.), Tirupati, Andhra Pradesh, India – 517507.}

{\fontfamily{cmtt}\selectfont
\textit{Email address: shivamvats@students.iisertirupati.ac.in} }
\end{flushleft}

\end{large}
\end{document}